
\documentclass[a4paper, english, 10pt]{amsart}

%
\textwidth=7.7in
\oddsidemargin=-0.63in
\evensidemargin=-0.63in

\usepackage{amsfonts}
\usepackage{caption}
\usepackage{forest}
\usepackage{amsmath}
\usepackage{amssymb}
\usepackage{tikz}
\usepackage[cp850]{inputenc}
\usepackage[bookmarksnumbered,plainpages
]{hyperref}
\usepackage{color}
\usepackage{mathtools}
\usepackage{MnSymbol}
\usepackage{subfigure}
\usepackage[all]{xy}
\usepackage{amsaddr}

\setcounter{MaxMatrixCols}{10}

\allowdisplaybreaks
\newtheorem{theorem}{Theorem}[section]

\newtheorem{proposition}[theorem]{Proposition}
\newtheorem{corollary}[theorem]{Corollary}

\newtheorem{remark}[theorem]{Remark}
\newtheorem{lemma}[theorem]{Lemma}
\newtheorem{conjecture}[theorem]{Conjecture}

\newtheorem{example}[theorem]{Example}

\def\sym#1{\mathrm{Sym}(#1)}
\def\c#1{\mathrm{con}_{#1}}
\def\comment#1{{\color{red} #1}}

\def\aut#1{\mathrm{Aut}(#1)}

\def\aff#1{\mathrm{Aff}#1}

\def\lmlt{\mathrm{LMlt}}
\def\N{Norm}
\def\ldiv{\backslash}

\def\dis{\mathrm{Dis}}

\def\setof#1#2{\{#1\, : \,#2\}}

\newcommand*\xbar[1]{%
   \hbox{%
     \vbox{%
       \hrule height 0.5pt 
       \kern0.5ex
       \hbox{%
         \kern-0.1em
         \ensuremath{#1}%
         \kern-0.1em
       }%
     }%
   }%
}

\setlength{\textheight}{225mm} \setlength{\topmargin}{0.46cm}
\setlength{\textwidth}{150mm} \setlength{\oddsidemargin}{0.46cm}
\setlength{\evensidemargin}{0.46cm}



\title{A conjecture on superconnected quandles}

\author{Marco Bonatto}

\address[Marco Bonatto]{}

\email{marco.bonatto.87@gmail.com}
%

\begin{document}

\begin{abstract}
We study simple superfaithful and superconnected quandles and we found counterexamples to a conjecture suggested by computational data. We provide also examples of superconnected quandles built using group theoretical results and investigate primitive quandles.
\end{abstract}

\maketitle

\section{Introduction}

Racks and quandles are binary algebraic structures that have been studied because of their connections with knot theory and statistical mechanics.  Such structures have been introduced independently by Joyce and Matveev \cite{J,Matveev} as algebraic invariants of knots and links.  

From a purely algebraic viewpoint, quandles have been studied using several tools and tecniques coming from group theory, module theory and universal algebra \cite{AG, hsv, Medial,CP}.  In particular, commutator theory (and the related notions of abelianness and solvability) in the sense of \cite{comm} and non-trivial Mal'cev conditions have been investigated respectively in \cite{CP} and \cite{Maltsev_paper}.

For a finite quandle, having a Mal'cev term is equivalent to the property of being {\it superconnected}. In this paper we want to investigate the building blocks of Mal'cev varieties of quandles, namely finite superconnected simple quandles.  An example of a class of superconnected quandles is given by finite left distributive quasigroups or {\it latin} quandles. According to the main results of \cite{Stein2} and of \cite{CP}, finite latin quandles are solvable in the sense of \cite{comm} and accordingly the simple ones are abelian \cite{Principal}. We wonder if this property holds also for finite superconnected quandles and so we formulate the following conjectures, also suggested by computational data.

%
\begin{conjecture}\label{conj1} \text{}
\begin{itemize}
	\item[(i)] Finite superconnected quandles are solvable.
	\item[(ii)] Finite simple superconnected quandles are abelian.
\end{itemize}
\end{conjecture}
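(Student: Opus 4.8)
The plan is to attack the conjecture along the only lines for which there is precedent, namely the latin case, and to watch carefully where that argument refuses to generalize. Recall the mechanism for a finite latin quandle $Q$: solvability is extracted from \cite{Stein2} (near-nilpotency of the multiplication group of a distributive quasigroup) together with the translation of group-theoretic solvability into commutator-theoretic solvability carried out in \cite{CP}, and the simple latin quandles are then forced to be abelian because a simple abelian algebra in a congruence-modular variety is affine \cite{Principal}, which in the quandle setting means an affine quandle over a simple $\mathbb{Z}[t,t^{-1}]$-module. So the target shape of a proof is: for (ii), show that a finite simple superconnected quandle $Q$ is abelian and then invoke \cite{Principal} to conclude it is affine; for (i), show that $\dis(Q)$ (equivalently $\lmlt(Q)$) of a finite superconnected quandle is solvable, and push this through the standard dictionary between congruences of $Q$ and $\dis(Q)$-invariant structure to get solvability of $Q$ in the sense of \cite{comm}.

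The crux is whether superconnectedness can play the role that the latin (quasigroup) property plays above. In the latin case the translations are rigid enough to pin down $\lmlt(Q)$; by \cite{Maltsev_paper}, superconnectedness is exactly the existence of a Mal'cev term, which should yield a weaker but still usable rigidity on the action of $\dis(Q)$ on $Q$. I would analyze the action of a point stabilizer $\dis(Q)_a$ on $Q\setminus\{a\}$ under the superconnectedness hypothesis and try to show it is forced to be close to semiregular, or abelian, so that for a simple $Q$ the whole structure collapses to the affine case; if that works, combining it with the solvability transfer above would settle both parts.

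The main obstacle — and, given the abstract, the point where the argument genuinely breaks — is precisely this transfer of rigidity. A Mal'cev term of a superconnected quandle need not be assembled from the quandle operation the way the latin multiplication is, so there is no reason $\dis(Q)$ must be solvable, and a non-abelian simple group can plausibly be realized as a section of $\dis(Q)$ for a carefully engineered simple superconnected quandle. Anticipating this, the second half of the plan is disproof rather than proof: run a computer search over simple quandles of small order, and in parallel build candidate superconnected quandles from non-solvable (e.g.\ simple) groups via the coset/conjugation constructions used later in the paper, verify the superconnectedness criterion of \cite{Maltsev_paper}, and check non-abelianness (and non-solvability) directly via commutator theory. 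A single such example refutes (ii), and hence also (i).
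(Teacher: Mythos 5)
Your overall verdict is the right one: the conjecture is false, and the paper's ``proof'' of this statement is in fact a refutation by counterexample, exactly the pivot you make in the second half of your plan. Your diagnosis of \emph{why} the latin-case argument does not transfer (superconnectedness only gives a Mal'cev term, not the rigidity of translations needed to control $\dis(Q)$) is also consistent with what happens in the paper.

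However, as it stands your proposal has a genuine gap: it never actually produces a counterexample, and the entire mathematical content of the refutation lies in exhibiting one and \emph{proving} it is superconnected. ``Build candidates from simple groups and verify the superconnectedness criterion'' is a research plan, not an argument --- superconnectedness is a condition on \emph{every} subquandle and is not something one verifies by inspection. The paper's route is concrete: take $Q=(L,t,1)$ for $L$ a non-abelian finite simple group and $t>1$ coprime with $|L|$. Such $Q$ is a simple quandle (by the classification of simple quandles over simple groups), it has $\dis(Q)\cong L^t$ and $|L_x|=t$, and the coprimality of $|L_x|$ with $|\dis(Q)|$ forces a normal $\pi$-complement for $\langle L_x\rangle$; via the $\star$-property for the conjugacy class of $L_x$ (Theorem \ref{pi'2} together with Remark \ref{rem on complement}, packaged as Corollary \ref{semidirect2} and Corollary \ref{(L,t,1) super}) one concludes that $Q$ is superconnected. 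Non-abelianness is then immediate since an abelian simple quandle is affine over an abelian group while $\dis(Q)\cong L^t$ is non-abelian, and for a simple algebra non-abelian implies non-solvable, so both (i) and (ii) fail. This group-theoretic coprimality/normal-complement mechanism is the missing idea in your proposal; a blind computer search over small orders would likely not find these examples either, since the smallest one of this form already has order $|L|^{t}/|L|=|L|^{t-1}\geq 60^{6}$ (e.g. $L=A_5$, $t=7$). The paper additionally sharpens the counterexample to primitive quandles by showing $(L,t,1)$ is primitive exactly when $t$ is prime.
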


Clearly Conjecture \ref{conj1}(i) implies \ref{conj1}(ii) and Conjecture(i) is true for {\it involutory} quandles, indeed superconnected involutory quandles are latin and then solvable \cite[Theorem 3.6]{Super}. Conjecture \ref{conj1}(i) has been verified computationally on the whole \cite{RIG} library of GAP and Conjecture \ref{conj1}(ii) is verified on an extended library of simple quandles up to size $10^3$ computed by Milan Cvr\v{c}ek.

Finite simple quandles are constructed over finite simple groups.  In the paper study superfaithful and superconnnected simple quandles and we are able to characterize such properties for simple quandles built over some particular simple groups. The main results in this direction is to show counterexamples to the Conjecture \ref{conj1}(ii) and in particular we show that such a conjecture is not true even when restricted to {\it primitive} quandles.

The paper is organized as follows. In Section \ref{Pre} we show all the basics and the preliminary results on left quasigroups and quandles.  In Section \ref{Sconj} we fucus on conjugation quandles and in particular on the property of being superfaithful and superconnected.  The main results on simple quandles are in Section \ref{simple} and Section \ref{primitive}.

\subsection*{Acknowledgements}
We would like to acknowledge Geoffrey Robinson for pointing us out the relevant results Theorem \ref{pi'2} and Theorem \ref{O_p'} and Keith Kearnes for suggesting the proof of Proposition \ref{star prop and complement}. The proof of Lemma \ref{lemma subgrouop containing diagonal} was sketched by Sean Eberhard. We also thank Daniele Toller for his help in cleaning up some of arguments and proofs.

\section{Preliminary results}\label{Pre}

\subsection{Left quasigroups and quandles}

A left quasigroup is an algebraic structure given by a set $Q$ endowed with two binary operations $*,\ldiv$ such that 
\begin{eqnarray}
x*(x\ldiv y)\approx y\approx x\ldiv (x*y). \label{lQG}
\end{eqnarray}
We usually denote such algebraic structure as $(Q,*,\ldiv)$ or simply by $Q$. Note that according to \eqref{lQG} the mapping $L_x:y\mapsto x*y$ is a permutation for every $x\in Q$ (indeed $L_x^{-1}:y\mapsto x\ldiv y$). Hence we can define the {\it left multiplication group} of $Q$ as $\lmlt(Q)=\langle L_x,\, x\in Q\rangle$.  A left quasigroup $Q$ is said to be {\it connected} if $\lmlt(Q)$ is transitive over $Q$.  If every subalgebra of $Q$ is connected,  we say that $Q$ is {\it superconnected}. We refer to the orbit of $x\in Q$ with respect to the action of $\lmlt(Q)$ just as the {\it orbit of $x$}. It is easy to verify that the orbits of $Q$ are subalgebras.

The equivalence relation $\lambda_Q$ defined by setting $x\,\lambda_Q\, y$ if and only if $L_x=L_y$ is called the {\it Cayley kernel} of $Q$. A left quasigroup $Q$ is said to be {\it faithful} if $\lambda_Q$ is the identity relation. If every subalgebra of $Q$ is faithful then $Q$ is said to be {\it superfaithful}.  If the mapping $R_x:y\mapsto y*x$ is bijective for every $x\in Q$ then $Q$ is called {\it latin}. It is easy to see that idempotent superconnected and latin left quasigroups are superfaithful and finite latin left quasigroups are superconnected \cite{Super}.

A bijection $f$ on the set $Q$ is an automorphism of $(Q,*,\ldiv)$ if $f(x*y)=f(x)*f(y)$ for every $x,y\in Q$ (if so then also $f(x\ldiv y)=f(x)\ldiv f(y)$ for every $x,y\in Q$).  We denote the automorphism group of $Q$ as $\aut{Q}$.  

The left quasigroup $(Q,*,\ldiv)$ is {\it idempotent} if it satisfies the identity
\begin{eqnarray}
x*x\approx x.\label{ID}
\end{eqnarray}
A {\it rack} is a left quasigroup $(Q,*,\ldiv)$ such that the identity
\begin{eqnarray}
x*(y*z)\approx (x*y)*(x*z)\label{LD}
\end{eqnarray}
holds. According to \eqref{LD} the left multiplication group of a rack is a subgroup of $\aut{Q}$.  Idempotent racks are called {\it quandles}. 

Let us introduce some constructions of quandles over groups. 

\begin{itemize}
\item Let $G$ be a group and $H$ a subset of $G$ closed under conjugation (e.g. a union of conjugacy classes in $G$).  We denote by $Conj(H)$ the quandle with underlying set $H$ and the operation $x*y=xyx^{-1}$ for every $x,y\in H$.

\item Let $G$ be a group,  $\theta \in \aut{G}$ and $H\leq Fix(\theta)=\setof{x\in G}{\theta(x)=x}$.  The set of cosets $G/H$ endowed with the operation $xH*yH=x\theta(x^{-1} y)H$ for every $x,y\in G$ is a quandle.  We denote such quandle by $\mathcal{Q}(G,H,\theta)$ and we refer to it as {\it coset quandle}.  The group $G$ is a transitive group of automorphism of $\mathcal{Q}(G,H,\theta)$ (acting by $g\cdot xH=gxH$ for every $x,g\in G$).  On the other hand,  it is a very well know fact that if $Q$ is a quandle and $G\leq \aut{Q}$ is transitive over $Q$ then $Q$ is isomorphic to a coset quandle over $G$.  In particular connected quandles are basically coset quandles over their left multiplication groups.

\item Let $A$ be an abelian group and $f\in \aut{A}$. Then $Q=(A,*)$ where $x*y=(1-f)(x)+f(y)$ is a quandle denoted by $\aff(A,f)$. We say that $Q$ is an {\it affine} quandle over $A$.
\end{itemize}

Let us show how the first two constructions above are related.  

\begin{lemma}\label{quandles as conj}
Let $G$ be a group, $\theta\in\aut{G}$ and $H=G\rtimes \langle\theta\rangle$. Then
$$(1,\theta)^{H}=\setof{(g\theta(g)^{-1},\theta)}{g\in G}.$$
and $Conj((1,\theta)^H)\cong \mathcal{Q}(G,Fix(\theta),\theta).$
\end{lemma}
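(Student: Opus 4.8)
The plan is to verify the two assertions in turn, both by direct computation inside the semidirect product $H = G \rtimes \langle \theta \rangle$. First I would compute the conjugation orbit of $(1,\theta)$ in $\mathrm{Conj}(H)$. For $g \in G$, using the multiplication rule $(a,\theta^i)(b,\theta^j) = (a\,\theta^i(b), \theta^{i+j})$ and the inverse $(g,1)^{-1} = (g^{-1},1)$, one gets $(g,1)(1,\theta)(g,1)^{-1} = (g \theta(g^{-1}), \theta) = (g\theta(g)^{-1},\theta)$. This already shows $\supseteq$. For the reverse inclusion, conjugating $(1,\theta)$ by an arbitrary element $(g,\theta^k)$ only reintroduces an element of the same shape, since the $\theta^k$ part commutes past and contributes a power of $\theta$ that cancels; so every conjugate of $(1,\theta)$ lies in $\{(g\theta(g)^{-1},\theta) : g \in G\}$, giving the claimed description of $(1,\theta)^H$. (In particular the orbit sits entirely in the coset $G \times \{\theta\}$, which is the reason this works.)

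Next I would set up the isomorphism with $\mathcal{Q}(G, Fix(\theta), \theta)$. Define $\varphi : \mathcal{Q}(G, Fix(\theta), \theta) \to \mathrm{Conj}((1,\theta)^H)$ by $\varphi(xFix(\theta)) = (x\theta(x)^{-1}, \theta)$, i.e.\ $\varphi = $ (the map $x \mapsto x\theta(x)^{-1}$) composed with the bijection onto the orbit. I would check: (a) $\varphi$ is well-defined and injective, i.e.\ $x\theta(x)^{-1} = y\theta(y)^{-1}$ if and only if $xFix(\theta) = yFix(\theta)$ — this is the identity $x\theta(x)^{-1} = y\theta(y)^{-1} \iff y^{-1}x \in Fix(\theta)$, which is a one-line rearrangement; (b) surjectivity is immediate from the computation of $(1,\theta)^H$ above; (c) $\varphi$ respects the quandle operation: on the left side $xFix(\theta) * yFix(\theta) = x\theta(x^{-1}y)Fix(\theta)$, and on the right side the conjugation $(x\theta(x)^{-1},\theta)(y\theta(y)^{-1},\theta)(x\theta(x)^{-1},\theta)^{-1}$ should expand — after using the group law and the automorphism property of $\theta$ — to $(x\theta(x^{-1}y)\theta(x\theta(x^{-1}y))^{-1}, \theta)$, which is exactly $\varphi$ applied to $x\theta(x^{-1}y)Fix(\theta)$.

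The only genuinely fiddly step is (c): keeping track of the $\theta$'s when conjugating $(a,\theta)(b,\theta)(a,\theta)^{-1}$ with $a = x\theta(x)^{-1}$, $b = y\theta(y)^{-1}$. Here one uses $(a,\theta)^{-1} = (\theta^{-1}(a^{-1}),\theta^{-1})$ and the multiplication rule twice; the $\theta$-components obviously collapse to $\theta$, and the $G$-component simplifies using $\theta(a)\theta^2(x) = \dots$ telescoping the $\theta$-conjugates of $x$ and $y$. I expect this to be the main obstacle only in the bookkeeping sense; conceptually it is the standard fact that $\mathrm{Conj}((1,\theta)^H)$ is a coset quandle over $G$ with point stabilizer $Fix(\theta)$ and twisting automorphism $\theta$, which is precisely the definition of $\mathcal{Q}(G, Fix(\theta),\theta)$, so an alternative to the explicit computation is to invoke the general correspondence between transitive quandles and coset quandles recalled in the preliminaries, with $G$ acting on $(1,\theta)^H$ by left multiplication and stabilizer of $(1,\theta)$ equal to $Fix(\theta)$.
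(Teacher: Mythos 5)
Your proposal is correct and follows essentially the same route as the paper: a direct computation of conjugation in $G\rtimes\langle\theta\rangle$ to identify the orbit of $(1,\theta)$, followed by the explicit map $xFix(\theta)\mapsto(x\theta(x)^{-1},\theta)$. The paper simply asserts that this map is a quandle isomorphism, whereas you spell out the verification (well-definedness, bijectivity, compatibility with the operations); your outlined computations are consistent with the paper's general conjugation formula.
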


\begin{proof}
Let $g,h\in G$ and $m,k\in \mathbb{Z}$. Then
\begin{align*}
(g,\theta^k)(h,\theta^m)(g,\theta^k)^{-1}&=(g,\theta^k)(h,\theta^m)(\theta^{-k}(g)^{-1},\theta^{-k})\\
&=(g\theta^k(h),\theta^{k+m})(\theta^{-k}(g)^{-1},\theta^{-k})=(g\theta^k(h)\theta^{m}(g)^{-1},\theta^{m}).
\end{align*}
Thus if $h=1$ and $m=1$ the first statement follows. Let $\mathcal{C}=(1,\theta)^{H}$. The map
$$\mathcal{Q}(G,Fix(\theta),\theta)\longrightarrow Conj(\mathcal{C}),\quad xFix(\theta)\mapsto (x\theta(x)^{-1},\theta)$$
is a quandle isomorphism.
\end{proof}

\begin{example}\label{conjugacy class}
Let $G$ be a group, $g\in G$ and $\theta	=\widehat{g}$ be the inner automorphism with respect to $g$.  Note that $Fix(\theta)=C_G(g)$.  The map 
$$\mathcal{Q}(G,Fix(\theta),\theta)\longrightarrow Conj(G\rtimes \langle\theta\rangle)\longrightarrow Conj(g^G), \quad hC_G(g)\mapsto (h\theta(h)^{-1},\theta)\mapsto hgh^{-1},$$
is a quandle isomorphism.
\end{example}

Let us show another construction of quandles over groups. Let $G$ be a group, $\theta\in \aut{G}$ and $t\geq 1$. We define
$$\theta_t:G^t\longrightarrow G^t,\quad (x_1,\ldots,x_t)\mapsto (\theta(x_t),x_1,x_2,\ldots, x_{t-1}).$$
It is easy to check that $\theta_t$ is an automorphism of $G^t$ and that $H_t=Fix(\theta_t)=\setof{(x,x,\ldots,x)}{x\in Fix(\theta)}\cong Fix(\theta)$. We denote by $(G,t,\theta)$ the coset quandle $\mathcal{Q}(G^t,H_t,\theta_t)$. Note that, if $t=1$ then $(G,1,\theta)$ is just $\mathcal{Q}(G,Fix(\theta),\theta)$.  Moreover if $\theta$ is an inner automorphism,  then $(G,1,\theta)$ is isomorphic to a conjugation quandle over a conjugacy class by virtue of Example \ref{conjugacy class}.
%
%
%
%

\subsection{Invariant equivalence relations and congruences}
Let $Q$ be a set. We denote by $0_Q=\setof{(x,x)}{x\in Q}$ the bottom element of the lattice of the equivalence relations on $Q$ and by $1_Q=Q\times Q$ the top element. Given an equivalence relation $\alpha$ we denote by $[x]_\alpha$ (or simply by $[x]$) the class of $x\in Q$ with respect to $\alpha$ and by $Q/\alpha$ denotes the set of equivalence classes.

Let $(Q,*,\ldiv)$ be a left quasigroup. We can define two operators from the lattice of equivalence relations on $Q$ to the lattice of subgroups of the left multiplication group of $Q$. Namely:
\begin{align*}
\dis_*:\alpha\mapsto \dis_\alpha&=\langle h L_x L_y^{-1}h^{-1},\, x,\alpha\, y,\, h\in \lmlt(Q)\rangle\\
\dis^*: \alpha\mapsto\dis^\alpha&=\setof{h\in \lmlt(Q)}{h(x)\,\alpha\, x\text{ for every } x\in Q}
\end{align*}

We denote $\dis_{Q\times Q}$ just as $\dis(Q)$ and we call it {\it displacement group} of $Q$.  In particular, $\lmlt(Q)=\dis(Q)\langle L_x\rangle$ for every $x\in Q$ and so if $Q$ is idempotent they have the same orbits \cite[Lemma 1.1]{Super}.

Let $\alpha$ be an equivalence relation on $Q$. If the classes of $\alpha$ are blocks with respect to the action of $\lmlt(Q)$ we say that $\alpha$ is {\it invariant}. If $Q$ is idempotent and $x\, \alpha\, y$ then $x=L_x(x)\, \alpha \, L_x(y)$, i.e. the class of $x$ with respect to $\alpha$ is a subalgebra. It is easy to check that the set of invariant equivalence relations is a sublattice of the equivalence relations.  A left quasigroup is {\it primitive} if it has no invariant equivalence relations (i.e. the action of the left multiplication is primitive).

\begin{example}\label{ex sigma}
Let $Q$ be a left quasigroup and $N$ be a normal subgroup of $\lmlt(Q)$. Then the equivalence relation
$$x\, \sigma_N\, y\text{ if and only if } N_x=N_y$$ 
is an invariant equivalence relation.
\end{example}

\begin{example}
Let $Q$ be a quandle and $s,t$ be binary terms. We define the map 
$$p:Q\longrightarrow \mathbb{N},\quad x\mapsto |\setof{y\in Q}{t(x,y)=s(x,y)}|.$$
The relation $\sim_p$ as $x\, \sim_p\, y$ if and only if $p(x)=p(y)$ is an invariant equivalence relation. Indeed $L_z$ is a bijective between $\setof{y\in Q}{t(x,y)=s(x,y)}$ and $\setof{y\in Q}{t(L_z(x),y)=s(L_z(x),y)}$.
\end{example}

A congruence of a left quasigroup $(Q,*,\ldiv)$ is an equivalence relation $\alpha$ on $Q$ such that 
$$x*z \, \alpha \, y*t\text{  and  } x\ldiv z \, \alpha \, y\ldiv t$$ whenever $x\, \alpha \, y $ and $x\, \alpha\, t$.

Congruences and surjective homomorphisms are in one to one correspondence (see for instance \cite{UA}).  Indeed, given a congruence $\alpha$,  the operation $[x]*[y]=[x*y]$ on the factor set $Q/\alpha$ is well-defined, $(Q/\alpha,*)$ is a left quasigroup and the natural map $x\mapsto [x]$ is a morphism.  On the other hand given a surjective homomorphism $f:Q_1\longrightarrow Q_2$ then $\ker{f}=\setof{(x,y)\in Q_1\times Q_1}{f(x)=f(y)}$ is a congruence of $Q$.

Moreover, the assignment $\pi_\alpha:L_x\longrightarrow L_{[x]_\alpha}$ for every $x\in Q$ can be extended to a group homomorphism between $\lmlt(Q)$ and $\lmlt(Q/\alpha)$ with kernel denoted by $\lmlt^\alpha$. Such map $\pi_\alpha$ can be restricted and corestricted to $\dis(Q)$ and $\dis(Q/\alpha)$ and the kernel of the restricted map is $\dis^\alpha$ as defined above.

Congruences are a special class of invariant equivalence relations as shown in the following lemma.

\begin{lemma}\cite[Lemma 1.5]{semimedial}\label{simple iff}
Let $(Q,*,\ldiv)$ be a left quasigroup and $\alpha$ be an equivalence relation on $Q$. The following are equivalent:
\begin{itemize}
\item[(i)] $\alpha$ is a congruence.
\item[(ii)] $\alpha$ is invariant and $\dis_\alpha\leq \dis^\alpha$.
\end{itemize}
\end{lemma}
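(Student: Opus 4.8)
The plan is to prove the two implications separately, exploiting in both directions the dictionary between an element $g\in\lmlt(Q)$ and the way $g$ permutes $\alpha$-classes; the only arithmetic involved will be a couple of two-step ``slides'' of $\alpha$-related pairs.

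Assume first that $\alpha$ is a congruence. Invariance is immediate: from $x\,\alpha\,y$ and $z\,\alpha\,z$ the congruence property gives $z*x\,\alpha\,z*y$ and $z\ldiv x\,\alpha\,z\ldiv y$, so every $L_z$ and every $L_z^{-1}$ sends $\alpha$-classes into $\alpha$-classes, hence so does every $g\in\lmlt(Q)$, and therefore $g$ maps $\alpha$-classes bijectively onto $\alpha$-classes, i.e. the classes are blocks. For $\dis_\alpha\leq\dis^\alpha$, note that $\dis^\alpha$ is a subgroup of $\lmlt(Q)$, normal once $\alpha$ is invariant, so it is enough to show $L_xL_y^{-1}\in\dis^\alpha$ whenever $x\,\alpha\,y$. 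Given such a pair and an arbitrary $z\in Q$, put $w=y\ldiv z$; then $z=y*w$ and $L_xL_y^{-1}(z)=x*w$, and since $x\,\alpha\,y$, $w\,\alpha\,w$, the congruence property yields $x*w\,\alpha\,y*w=z$, as required. Every generator $hL_xL_y^{-1}h^{-1}$ of $\dis_\alpha$ is then a conjugate of an element of $\dis^\alpha$, hence lies in $\dis^\alpha$.

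Conversely, assume $\alpha$ is invariant and $\dis_\alpha\leq\dis^\alpha$, and take $x\,\alpha\,y$, $z\,\alpha\,t$. For the product identity I would slide from $x*z$ to $y*t$ in two steps: first, $L_xL_y^{-1}$ is a generator of $\dis_\alpha$ (take $h=1$), hence lies in $\dis^\alpha$, so applying it to $y*z$ gives $x*z=L_xL_y^{-1}(y*z)\,\alpha\,(y*z)$; second, invariance applied to $L_y$ turns $z\,\alpha\,t$ into $y*z\,\alpha\,y*t$; transitivity gives $x*z\,\alpha\,y*t$. The division identity is analogous: $L_y^{-1}L_x=L_y^{-1}(L_xL_y^{-1})L_y$ is a conjugate of a generator of $\dis_\alpha$, hence $L_x^{-1}L_y=(L_y^{-1}L_x)^{-1}\in\dis_\alpha\leq\dis^\alpha$, so $x\ldiv z=L_x^{-1}L_y(y\ldiv z)\,\alpha\,(y\ldiv z)$, and invariance applied to $L_y^{-1}$ gives $y\ldiv z\,\alpha\,y\ldiv t$; transitivity finishes it.

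I expect the only real care to be bookkeeping: checking that each product of left translations used is literally a conjugate (or the inverse of a conjugate) of one of the declared generators $hL_aL_b^{-1}h^{-1}$ of $\dis_\alpha$, and that $\dis^\alpha$ is closed under the manipulations performed on it --- it is always a subgroup of $\lmlt(Q)$, and it is normal in $\lmlt(Q)$ precisely when $\alpha$ is invariant, which is the extra hypothesis in (ii) and was shown above to follow from (i). Beyond that the argument is purely formal.
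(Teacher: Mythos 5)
Your proof is correct: both directions are carried out cleanly, the identification $L_xL_y^{-1}(z)=x*(y\ldiv z)$ is used correctly, and the conjugation bookkeeping (normality of $\dis^\alpha$ under invariance, and $L_y^{-1}L_x$ being a conjugate of the generator $L_xL_y^{-1}$) all checks out. The paper only cites this lemma from \cite{semimedial} without reproducing a proof, and your argument is the standard one that source gives, so there is nothing further to compare.
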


According to Lemma \ref{simple iff}, primitive left quasigroups are simple. The smallest example of a simple quandle that is not primitive is {\tt SmallQuandle(12,3)} from the RIG database of GAP.

\begin{corollary}
Let $Q$ be a left quasigroup and $\alpha$ be an invariant equivalence relation. If $\alpha\leq \lambda_Q$ then $\alpha$ is a congruence of $Q$.
\end{corollary}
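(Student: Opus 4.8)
The plan is to invoke Lemma \ref{simple iff}, which characterizes congruences among invariant equivalence relations: an equivalence relation is a congruence if and only if it is invariant and $\dis_\alpha \le \dis^\alpha$. Since $\alpha$ is assumed invariant, the only thing left to check is the inclusion $\dis_\alpha \le \dis^\alpha$.

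First I would unwind the definition of $\dis_\alpha$: it is the subgroup of $\lmlt(Q)$ generated by the elements $h L_x L_y^{-1} h^{-1}$ with $x\,\alpha\,y$ and $h\in\lmlt(Q)$. The key point is to use the hypothesis $\alpha\le\lambda_Q$ on these generators. If $x\,\alpha\,y$, then $x\,\lambda_Q\,y$, which by definition of the Cayley kernel means $L_x=L_y$; hence $L_xL_y^{-1}=\mathrm{id}$ and so every generator $h L_x L_y^{-1} h^{-1}$ of $\dis_\alpha$ is the identity. Therefore $\dis_\alpha$ is the trivial subgroup of $\lmlt(Q)$.

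Since the trivial subgroup is contained in any subgroup, in particular $\dis_\alpha=\{\mathrm{id}\}\le \dis^\alpha$. Both conditions of Lemma \ref{simple iff}(ii) are then satisfied ($\alpha$ invariant by hypothesis, and $\dis_\alpha\le\dis^\alpha$ just established), so $\alpha$ is a congruence of $Q$.

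There is no real obstacle here: the statement follows immediately from the definitions once Lemma \ref{simple iff} is available. The only minor subtleties are to make sure the hypothesis of invariance is carried along (it is needed to apply Lemma \ref{simple iff}) and to observe that the generating set of $\dis_\alpha$ collapses entirely to $\{\mathrm{id}\}$ under the assumption $\alpha\le\lambda_Q$, rather than merely to some subgroup of $\dis^\alpha$.
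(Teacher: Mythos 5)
Your proof is correct and is essentially identical to the paper's: the paper's one-line argument is precisely that $\alpha\leq\lambda_Q$ forces $\dis_\alpha=1\leq\dis^\alpha$, so Lemma \ref{simple iff} applies. You have simply spelled out the collapse of the generators of $\dis_\alpha$ in more detail.
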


\begin{proof}
It follows by \cite{semimedial}, since $1=\dis_\alpha\leq \dis^\alpha$.
\end{proof}

Given a left quasigroup $Q$ we can define two operators from the lattice of the subgroups of the displacement group of $Q$ to the lattice of equivalence relations of $Q$ as follows:
\begin{align*}
\mathcal{O}_*:N\mapsto \mathcal{O}_N&=\setof{(x,y)\in Q\times Q}{y=h(x) \text{ for some }h\in N},\\
\c{*}:N\mapsto \c{N}&=\setof{(x,y)\in Q\times Q}{L_x L_y^{-1}\in N}.
\end{align*}

Such relations do not need to be invariant in general. We define the lattice of {\it admissible subgroups} as $\N(Q)=\setof{N\leq \dis(Q)}{\mathcal{O}_N\leq \c{N} \text{ and } N\trianglelefteq \lmlt(Q)}$. For such subgroups the relation $\mathcal{O}_N$ is a congruence of $Q$ \cite[Corollary 1.9(ii)]{semimedial}. 

For further details on the operators $\dis_*,\dis^*,\c{*}$ and $\mathcal{O}_*$ we refer the reader to \cite{CP, semimedial, Galois}. 

\begin{lemma}\label{simple norm}
Let $(Q,*,\ldiv)$ be a simple left quasigroup. Then $\N(Q)=\{1,\dis(Q)\}$.
\end{lemma}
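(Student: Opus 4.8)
The plan is to exploit the correspondence, recalled in the excerpt, between admissible subgroups $N\in\N(Q)$ and congruences $\mathcal{O}_N$ of $Q$, together with the hypothesis that $Q$ is simple (so its only congruences are $0_Q$ and $1_Q$). First I would take $N\in\N(Q)$ arbitrary. By definition $N\trianglelefteq\lmlt(Q)$ and $\mathcal{O}_N\leq\c{N}$, so by \cite[Corollary 1.9(ii)]{semimedial} the relation $\mathcal{O}_N$ is a congruence of $Q$. Since $Q$ is simple, either $\mathcal{O}_N=0_Q$ or $\mathcal{O}_N=1_Q$. The task is then to show that the first case forces $N=1$ and the second forces $N=\dis(Q)$.

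For the case $\mathcal{O}_N=1_Q$: this says that $N$ acts transitively on $Q$. Since $N\trianglelefteq\lmlt(Q)$ and $N$ is transitive, for each $x\in Q$ we have $\lmlt(Q)=N\cdot(\lmlt(Q))_x$; more to the point, I would use the fact that $\dis(Q)$ is generated by the elements $L_xL_y^{-1}$, and that for $h\in\lmlt(Q)$ one has $h L_x h^{-1}=L_{h(x)}$ in the rack case — but here $Q$ need not be a rack, so instead I would argue directly: since $N$ is transitive, for any $x,y$ there is $n\in N$ with $n(x)=y$, and then $L_xL_y^{-1}=L_x L_{n(x)}^{-1}$. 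One shows $L_x L_{n(x)}^{-1}\in N$ by writing it, using normality of $N$, as a product of a conjugate of $n$ by an element of $\lmlt(Q)$ and a suitable correction term that again lies in $N$; the cleanest route is to observe that $\c{N}\supseteq\mathcal{O}_N=1_Q$ already gives $L_xL_y^{-1}\in N$ for all $x,y\in Q$, hence the generators of $\dis(Q)$ all lie in $N$, so $N=\dis(Q)$.

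For the case $\mathcal{O}_N=0_Q$: this says that no nontrivial element of $N$ has a fixed-point-free-style collision, i.e. $n(x)=x$ for all $x\in Q$ whenever $n\in N$, which forces $n=\mathrm{id}$; hence $N=1$. Thus every $N\in\N(Q)$ is either $1$ or $\dis(Q)$, and conversely both $1$ and $\dis(Q)$ are trivially admissible ($1\trianglelefteq\lmlt(Q)$ with $\mathcal{O}_1=0_Q\le\c{1}$, and $\dis(Q)\trianglelefteq\lmlt(Q)$ with $\mathcal{O}_{\dis(Q)}\le 1_Q=\c{\dis(Q)}$), so $\N(Q)=\{1,\dis(Q)\}$.

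I expect the only delicate point to be the $\mathcal{O}_N=1_Q$ direction, specifically making rigorous that transitivity of the normal subgroup $N$ forces all generators $L_xL_y^{-1}$ of $\dis(Q)$ into $N$; as indicated above this is immediate once one notes $\mathcal{O}_N\le\c{N}$ is part of the admissibility hypothesis, so $1_Q=\mathcal{O}_N\le\c{N}$ directly yields $L_xL_y^{-1}\in N$ for all $x,y$. Everything else is a routine unwinding of the definitions of $\mathcal{O}_*$, $\c{*}$, and simplicity.
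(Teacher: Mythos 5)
Your proof is correct and follows essentially the same route as the paper: admissibility makes $\mathcal{O}_N$ a congruence, simplicity forces it to be $0_Q$ (whence $N=1$) or $1_Q$ (whence $1_Q=\mathcal{O}_N\leq \c{N}$ gives $\dis(Q)\leq N$, and $N\leq\dis(Q)$ by definition of $\N(Q)$). The detour about transitivity and normality in the $1_Q$ case is unnecessary, as you yourself note; the "cleanest route" you settle on is exactly the paper's argument.
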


\begin{proof}
Assume that $N\in \N(Q)$. Then $\alpha=\mathcal{O}_N\in \{0_Q,1_Q\}$. If $\alpha=0_Q$ then $N=1$. Otherwise $\c{N}=1_Q$ and so $\dis(Q)\leq N$.
\end{proof}

Let us turn our attention to racks and quandles again. In particular, for a rack $Q$ the admissible subgroup coincide with the normal subgroups of $\lmlt(Q)$ contained in $\dis(Q)$ \cite[Lemma 3.1(ii)]{semimedial} and $\lambda_Q$ is a congruence.

\begin{lemma}\label{dis and invariant}
Let $Q$ be a rack and $\alpha$ be an invariant equivalence relation. Then: 
\begin{itemize}
\item[(i)] $\dis_\alpha,\dis^\alpha\in \N(Q)$.
\item[(ii)] $[\dis^\alpha,\lmlt(Q)]\leq \dis_\alpha$
\item[(iii)] If $Q$ is a simple quandle then $\dis_\alpha=\dis(Q)$ and $\dis^\alpha=1$.
\end{itemize}
\end{lemma}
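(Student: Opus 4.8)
The plan is to route everything through the description of admissible subgroups recalled above: for a rack, $\N(Q)$ consists exactly of the normal subgroups of $\lmlt(Q)$ contained in $\dis(Q)$. With that in hand, (i) is a pair of routine checks, (ii) is the one place where left distributivity enters, and (iii) is extracted from (i) together with Lemma \ref{simple norm}.

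For (i) I would observe first that both subgroups automatically sit inside $\dis(Q)$: the generators $L_xL_y^{-1}$ of $\dis_\alpha$ belong to the normal subgroup $\dis(Q)$, hence so do all their $\lmlt(Q)$-conjugates, and $\dis^\alpha$ is the kernel of $\pi_\alpha$ restricted to $\dis(Q)$ by the identification noted earlier. It then remains only to check normality in $\lmlt(Q)$. For $\dis_\alpha$ this is automatic, since its generating set $\{hL_xL_y^{-1}h^{-1}:x\,\alpha\,y,\ h\in\lmlt(Q)\}$ is stable under $\lmlt(Q)$-conjugation. For $\dis^\alpha$ I would use invariance of $\alpha$: if $h\in\dis^\alpha$ and $g\in\lmlt(Q)$, then $(ghg^{-1})(x)=g\bigl(h(g^{-1}x)\bigr)$ lies in the same $\alpha$-class as $g(g^{-1}x)=x$, because $h$ keeps $g^{-1}x$ inside its class and $g$ permutes the $\alpha$-classes; hence $ghg^{-1}\in\dis^\alpha$.

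For (ii) the key point is that in a rack every $h\in\lmlt(Q)\leq\aut{Q}$ is an automorphism, so $hL_zh^{-1}=L_{h(z)}$ for every $z\in Q$. When $h\in\dis^\alpha$ we have $h(z)\,\alpha\,z$, so $[h,L_z]=hL_zh^{-1}L_z^{-1}=L_{h(z)}L_z^{-1}$ is a generator of $\dis_\alpha$ (take the conjugating element trivial). Since $\dis_\alpha\trianglelefteq\lmlt(Q)$ by (i) and $\lmlt(Q)$ is generated by the $L_z$, the usual commutator identities then yield $[h,g]\in\dis_\alpha$ for all $g\in\lmlt(Q)$; letting $h$ run over $\dis^\alpha$ gives $[\dis^\alpha,\lmlt(Q)]\leq\dis_\alpha$, equivalently the image of $\dis^\alpha$ in $\lmlt(Q)/\dis_\alpha$ is central.

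For (iii) take $Q$ a simple quandle and $\alpha$ a proper invariant equivalence relation --- the case with content, since for a simple quandle the congruences are $0_Q$ and $1_Q$, on which the two conclusions of (iii) degenerate. By Lemma \ref{simple norm}, $\N(Q)=\{1,\dis(Q)\}$, so (i) forces $\dis_\alpha,\dis^\alpha\in\{1,\dis(Q)\}$. The one non-formal ingredient is that a non-trivial simple quandle is connected: the $\lmlt(Q)$-orbit relation is a congruence (if $x,y$ lie in one orbit and $z,w$ in another, then $x*z$ and $y*w$ both lie in the orbit of $z$), and were it $0_Q$ the quandle would satisfy $x*y=y$ and so fail to be simple; hence $\dis(Q)$, which has the same orbits as $\lmlt(Q)$ on the idempotent $Q$, acts transitively. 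If $\dis^\alpha=\dis(Q)$, then for any $x,y$ some element of $\dis(Q)$ carries $x$ to $y$ while keeping it $\alpha$-related to $x$, forcing $\alpha=1_Q$, a contradiction; hence $\dis^\alpha=1$. If $\dis_\alpha=1$, then in particular $L_xL_y^{-1}=1$ whenever $x\,\alpha\,y$ (these are among the generators), so $\alpha\leq\lambda_Q$, whence $\alpha$ is a congruence by the Corollary above and therefore $\alpha\in\{0_Q,1_Q\}$, again a contradiction; hence $\dis_\alpha=\dis(Q)$. The point to get right is this last part --- especially the connectedness step that excludes $\dis^\alpha=\dis(Q)$; everything else is formal play with the operators $\dis_*$ and $\dis^*$.
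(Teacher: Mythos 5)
Your proof is correct and follows essentially the same route as the paper's: admissibility via normality in $\lmlt(Q)$ together with containment in $\dis(Q)$ and the cited characterization of $\N(Q)$ for racks, the commutator computation $[h,L_z]=L_{h(z)}L_z^{-1}$ for (ii), and the dichotomy from Lemma \ref{simple norm} for (iii). The only differences are cosmetic: you spell out why simple quandles are connected (which the paper takes for granted, and which anyway relies on the standing convention excluding the two-element projection quandle), and in the $\dis_\alpha=1$ branch you route through the corollary on invariant relations below $\lambda_Q$ rather than quoting faithfulness of simple quandles directly.
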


\begin{proof}
(i) Let $h\in \lmlt(Q)$, $x\,\alpha\, y$ and $g\in \dis^\alpha$. Then
\begin{align*}
hL_x L_y h^{-1}=L_{h(x)} L_{h(y)}^{-1}\in \dis_\alpha,\quad  h g h^{-1}(x) &\,\alpha\, h h^{-1} (x)=x.
\end{align*}
Therefore $\dis_\alpha,\dis^\alpha$ are normal in $\lmlt(Q)$.

(ii) Let $h\in \dis^\alpha$ and $x\in Q$. Then $[h,L_x]=h L_x h^{-1} L_x^{-1}=L_{h(x)} L_x^{-1}\in \dis_\alpha$. We can conclude since if $g=L_{x_1}^{k_1}\ldots L_{x_n}^{k_n}$ then $[h,g]$ is a product of conjugates of $[h,L_{x_i}]$ for $i=1,\ldots,n$.
%
%
%
%
%
%

(iii) The groups $\dis_\alpha,\dis^\alpha$ are admissible and $\N(Q)=\{1,\dis(Q)\}$ according to Lemma \ref{dis and invariant}(i) and Lemma \ref{simple norm}. If $\dis_\alpha=1$ then $\alpha\leq \lambda_Q=0_Q$. If $\dis^\alpha=\dis(Q)$ then $1_Q=\mathcal{O}_{\dis(Q)}\leq \alpha$.
\end{proof}

%
%
%

Let $(Q,*,\ldiv)$ be a left quasigroup and $n\in \mathbb{Z}$,  Let us define the left quasigroup $Q_n=(Q,*_n,\ldiv_n)$ where $x*_n y=L_x^n(x)$ and $x\ldiv_n y=L_x^{-n}(y)$.

\begin{lemma}
Let $Q$ be a rack and $n\in \mathbb{Z}$. Then
\begin{itemize}
\item[(i)] $\lambda_{Q_n}$ is an invariant relation of $Q$. 
\item[(ii)] $\dis(Q_n)=\langle L_x L_y^{-1},\, x,y\in Q\rangle \in \N(Q)$.
\end{itemize}
\end{lemma}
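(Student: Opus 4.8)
My approach rests on a single identity: since $Q$ is a rack, $\lmlt(Q)\leq\aut{Q}$, so $hL_xh^{-1}=L_{h(x)}$ for every $h\in\lmlt(Q)$ and $x\in Q$, and raising to the $n$-th power,
\[
h L_x^n h^{-1}=L_{h(x)}^n\qquad\text{for all }h\in\lmlt(Q),\ x\in Q .
\]
First I would do the bookkeeping: the left translation of $Q_n$ at $x$ (written $L_x$ in the statement) is the permutation $L_x^n$ of $Q$, so $\lmlt(Q_n)=\langle L_x^n : x\in Q\rangle\leq\lmlt(Q)$ and the subgroup in (ii) is $\langle L_x^nL_y^{-n} : x,y\in Q\rangle$. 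The displayed identity says precisely that every $h\in\lmlt(Q)$ is an automorphism of $Q_n$; in particular $\lmlt(Q_n)\leq\aut{Q_n}$, so $Q_n$ is a rack (in general not idempotent), and this is what lets me invoke the rack-level results of Section~\ref{Pre} with $Q_n$ in place of $Q$.

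For (i), suppose $x\,\lambda_{Q_n}\,y$, i.e.\ $L_x^n=L_y^n$. For any $h\in\lmlt(Q)$ the displayed identity gives $L_{h(x)}^n=hL_x^nh^{-1}=hL_y^nh^{-1}=L_{h(y)}^n$, hence $h(x)\,\lambda_{Q_n}\,h(y)$. So every element of $\lmlt(Q)$ preserves $\lambda_{Q_n}$; applying this to $h^{-1}$ as well, $h$ permutes the $\lambda_{Q_n}$-classes, so the classes are blocks and $\lambda_{Q_n}$ is invariant.

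For (ii), since $L_z^n\in\lmlt(Q_n)\leq\aut{Q_n}$ we get $L_z^n(L_x^nL_y^{-n})L_z^{-n}=L_{L_z^n(x)}^nL_{L_z^n(y)}^{-n}$, so $\langle L_x^nL_y^{-n} : x,y\in Q\rangle$ is normalized by every generator of $\lmlt(Q_n)$; being normal in $\lmlt(Q_n)$, it coincides with its own normal closure there, which is $\dis(Q_n)$ by definition. This proves the displayed equality, and it remains to place $\dis(Q_n)$ in $\N(Q)$. As $Q$ is a rack, $\N(Q)$ is exactly the set of normal subgroups of $\lmlt(Q)$ contained in $\dis(Q)$, so two checks suffice. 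Normality in $\lmlt(Q)$ follows from the displayed identity: $hL_x^nL_y^{-n}h^{-1}=L_{h(x)}^nL_{h(y)}^{-n}\in\dis(Q_n)$ for every $h\in\lmlt(Q)$. For $\dis(Q_n)\leq\dis(Q)$ I would use that $\lmlt(Q)=\dis(Q)\langle L_{x_0}\rangle$ for a fixed $x_0\in Q$, so $\lmlt(Q)/\dis(Q)$ is cyclic and all $L_x$ have the same image in it; hence $L_x^nL_y^{-n}$ maps to the identity there and lies in $\dis(Q)$.

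None of the individual steps is deep. The point to watch --- the nearest thing to an obstacle --- is that $Q_n$ is a rack but not a quandle, so the facts about $\dis$, invariant equivalence relations and $\N$ must be used in their rack form rather than through idempotence; once that is in place, everything reduces to the two short conjugation computations built on $hL_x^nh^{-1}=L_{h(x)}^n$.
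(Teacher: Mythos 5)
Your proof is correct and rests on exactly the identity the paper uses, namely $hL_x^nh^{-1}=L_{h(x)}^n$ for $h\in\lmlt(Q)$; the paper's own proof is just this one line, and you have merely filled in the routine verifications (blocks for (i), normality and $L_x^nL_y^{-n}\in\dis(Q)$ via the cyclic quotient $\lmlt(Q)/\dis(Q)$ for (ii)). Same approach, nothing further to add.
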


\begin{proof}
Both the statements follow since $L_{h(x)}^n=h L_x^n h^{-1}$ for every $h\in \lmlt(Q)$.
\end{proof}

\section{Conjugation quandles}\label{Sconj}
 
\subsection{Group theoretical results}
Let $G$ be a group, $x\in G$. We say that $x^G$ has the $\star$-property if the following holds:
\begin{equation}\label{superconnected conj}
	\forall y\in x^G \, \text{ exists } h\in \langle x,y\rangle \, \text{such that}\,  y=hxh^{-1}.\tag{$\star$} \end{equation}
\begin{lemma}\label{star implies N=C}
Let $G$ be a group and assume that $x^G$ has $\star$-property. Then $C_G(y)=N_G(\langle y\rangle)$ and $x^G\cap C_G(y)=\{y\}$ for every $y\in x^G$.
\end{lemma}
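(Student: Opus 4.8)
The plan is to reduce both assertions to the base element $x$ and then invoke the $\star$-property in the crudest possible way.

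First I would record the transport step: since $x^G$ is a full conjugacy class it is invariant under conjugation by every element of $G$, and for $y=gxg^{-1}\in x^G$ one has $C_G(y)=gC_G(x)g^{-1}$ and $\langle y\rangle=g\langle x\rangle g^{-1}$, hence $N_G(\langle y\rangle)=gN_G(\langle x\rangle)g^{-1}$ and $x^G\cap C_G(y)=g\bigl(x^G\cap C_G(x)\bigr)g^{-1}$. Consequently it suffices to prove the two statements for $x$ itself, namely (a) $x^G\cap C_G(x)=\{x\}$ and (b) $N_G(\langle x\rangle)=C_G(x)$.

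For (a), one inclusion is trivial; for the other, take $z\in x^G\cap C_G(x)$. Since $z\in x^G$, the $\star$-property supplies $h\in\langle x,z\rangle$ with $z=hxh^{-1}$; but $z$ centralizes $x$, so $\langle x,z\rangle$ is abelian, forcing $hxh^{-1}=x$ and hence $z=x$. For (b), the inclusion $C_G(x)\le N_G(\langle x\rangle)$ is automatic; conversely, if $g\in N_G(\langle x\rangle)$ then $gxg^{-1}=x^k$ for some $k\in\mathbb{Z}$, and $x^k$ lies both in $\langle x\rangle$, so it centralizes $x$, and in $x^G$; thus $x^k\in x^G\cap C_G(x)=\{x\}$ by (a), giving $gxg^{-1}=x$, i.e. $g\in C_G(x)$. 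Combining with the transport step yields $C_G(y)=N_G(\langle y\rangle)$ and $x^G\cap C_G(y)=\{y\}$ for every $y\in x^G$.

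The only point that needs attention is that the $\star$-property is anchored at the specific element $x$: it tells us how to conjugate $x$ onto an arbitrary member of $x^G$ by an element of a small subgroup, but a priori not how to move two arbitrary members of the class onto each other, so the passage to a general $y$ must go through the conjugation/transport step rather than a second application of $\star$. Once that is noticed there is no genuine obstacle; the whole content of the proof is the observation that $\langle x,z\rangle$ degenerates to an abelian group precisely when $z$ commutes with $x$.
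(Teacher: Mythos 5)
Your proof is correct and follows essentially the same route as the paper's: reduce to the base point $x$ by conjugation, use the $\star$-property to see that an element of $x^G\cap C_G(x)$ is conjugate to $x$ inside the abelian group $\langle x,z\rangle$ and hence equals $x$, and then deduce $N_G(\langle x\rangle)=C_G(x)$ by observing that $gxg^{-1}\in x^G\cap C_G(x)$ for $g$ normalizing $\langle x\rangle$. Your explicit remark that the $\star$-property is anchored at $x$ and must be transported by conjugation is a useful clarification of a step the paper leaves implicit, but the argument is the same.
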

\begin{proof}
It is enough to prove the statement for $x$, since the normalizers and the centralizers of elements of $x^G$ are conjugate. If $y\in x^G\cap C_G(x)$ then $y$ and $x$ and conjugate in the abelian group $\langle x,y\rangle$. Therefore $y=x$.

If $y\in N_G(x)$ then $z=yxy^{-1}\in C_G(x)\cap x^G$ and so $z=x$, i.e. $y\in C_G(x)$.
\end{proof}

Let us introduce some notation. Let $G$ be a group and $\pi$ be a set of prime numbers;
\begin{itemize}
\item[(i)] $x\in G$ is a $\pi$-element if the primes dividing $|x|$ are those in $\pi$.
\item[(ii)] A subgroup $H\leq G$ has a normal $\pi$-complement, if there exists a normal subgroup $K$ of $G$ such that $G=KH$ and $|K|$ is coprime with the elements of $\pi$.
\item[(iii)] $O_{\pi'}(G)$ is the largest normal subgroup of $G$ with order coprime with the elements of $\pi$.
\end{itemize}

%
%


The following theorem shows a sufficient condition for a conjugacy class to satisfy the $\star$-property.

\begin{theorem} \label{pi'2}\cite[Theorem 1]{Robinson}
Let $G$ be a group, $x\in G$ and $x$ be a $\pi$-element of $G$. The following are equivalent:
\begin{itemize}
\item[(i)] $G=O_{\pi'}(G)C_G(x)$.
\item[(ii)] $\langle x, x^g\rangle$ has a normal $\pi$-complement and $[O_{\pi'}(\langle x,x^g\rangle),x]\cap C_G(x)\leq O_{\pi'}(C_G(x))$ for all $g\in G\setminus C_G(x)$.
\end{itemize}
If one of the previous properties hold then $x^G$ has the $\star$-property.
\end{theorem}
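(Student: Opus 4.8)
The plan is to establish the cited equivalence by a standard transfer/local-analysis argument and then deduce the $\star$-property from condition (i). Throughout write $C = C_G(x)$ and let $\pi$ be the set of primes dividing $|x|$.

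For the implication (i) $\Rightarrow$ (ii), I would argue as follows. Assume $G = O_{\pi'}(G)C$. First, for any $g \in G$, the subgroup $\langle x, x^g\rangle$ is generated by two conjugate $\pi$-elements; intersecting the hypothesis with this subgroup (using that $O_{\pi'}(G) \cap \langle x,x^g\rangle$ is a normal $\pi'$-subgroup of $\langle x,x^g\rangle$, hence lies in $O_{\pi'}(\langle x,x^g\rangle)$) gives $\langle x,x^g\rangle = O_{\pi'}(\langle x,x^g\rangle)\,C_{\langle x,x^g\rangle}(x)$. But $C_{\langle x,x^g\rangle}(x)$ contains $x$, which is a $\pi$-element generating a group whose order has $\pi$-part controlled by $x$; a short counting argument (or an application of the fact that a group which is the product of a normal $\pi'$-subgroup and a nilpotent $\pi$-subgroup has a normal $\pi$-complement) shows $\langle x,x^g\rangle$ has a normal $\pi$-complement, which must then be exactly $O_{\pi'}(\langle x,x^g\rangle)$. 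For the second clause, note $[O_{\pi'}(\langle x,x^g\rangle),x] \leq O_{\pi'}(\langle x,x^g\rangle)$ since the latter is normal in $\langle x,x^g\rangle$; hence $[O_{\pi'}(\langle x,x^g\rangle),x]\cap C$ is a $\pi'$-subgroup of $C$, and I claim it is normalized by $x$ and in fact normal in $C$ — using that $O_{\pi'}(\langle x,x^g\rangle)$ is $x$-invariant and standard commutator identities — so it is contained in $O_{\pi'}(C)$.

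For (ii) $\Rightarrow$ (i), the natural tool is transfer: I would show $x$ lies outside the focal subgroup of a Sylow $\pi$-subgroup (or rather work one prime at a time), using the hypothesis that each $\langle x,x^g\rangle$ has a normal $\pi$-complement to verify the hypotheses of a normal $\pi$-complement theorem of Frobenius type relative to $x$. Concretely, the condition on $\langle x,x^g\rangle$ for all $g \notin C$ forces any conjugate of $x$ fusing to $x$ to do so already inside the relevant local subgroup, and the extra commutator condition $[O_{\pi'}(\langle x,x^g\rangle),x]\cap C \leq O_{\pi'}(C)$ is exactly what is needed to push the $\pi'$-part of the fusion into $O_{\pi'}(G)$; assembling these via transfer yields $G = O_{\pi'}(G)C$. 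I expect this direction to be the main obstacle, since it requires setting up the transfer map carefully and invoking the right normal $\pi$-complement criterion; I would lean on the reference \cite{Robinson} for the precise fusion-theoretic statement rather than reprove it.

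Finally, assuming (i), to get the $\star$-property: given $y = x^g \in x^G$, I want $h \in \langle x,y\rangle$ with $y = hxh^{-1}$. Set $K = \langle x,y\rangle$; by the argument above $K$ has normal $\pi$-complement $N = O_{\pi'}(K)$, so $K/N$ is a $\pi$-group in which the images of $x$ and $y$ are conjugate (being conjugate in $K$... more carefully, $y$ is $G$-conjugate to $x$, and since both are $\pi$-elements of $K$ their images in $K/N$ are conjugate by a Sylow-$\pi$ argument inside $K/N$). Lift this conjugating element to some $h_0 \in K$; then $h_0 x h_0^{-1}$ and $y$ are two $\pi$-elements of $K$ congruent mod $N$, hence conjugate by an element of $N$ (they generate complements to $N$ that meet each Sylow intersection appropriately — here the commutator condition is used to control $[N,x]\cap C$). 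Composing gives the required $h \in K = \langle x,y\rangle$, completing the proof.
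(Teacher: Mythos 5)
This theorem is not proved in the paper at all: it is quoted from \cite[Theorem 1]{Robinson}, and the only original content is the remark that the final claim (the $\star$-property) appears as an intermediate step of Robinson's proof of (ii) $\Rightarrow$ (i). Since you also explicitly defer the direction (ii) $\Rightarrow$ (i) to the reference, your proposal is, for the main equivalence, the same move the paper makes; the question is whether the parts you do argue are sound, and there they have concrete problems. In (i) $\Rightarrow$ (ii), the opening step ``intersecting the hypothesis with $H=\langle x,x^g\rangle$ gives $H=O_{\pi'}(H)\,C_H(x)$'' is invalid: a factorization $G=NC$ does not restrict to arbitrary subgroups (take $G=S_3=A_3\langle(12)\rangle$ and $H=\langle(13)\rangle$). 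The correct route is to use (i) to write $x^g=x^n$ for some $n\in O_{\pi'}(G)$, observe that $x^{-1}x^n=[x,n]\in O_{\pi'}(G)$, so that $H/(H\cap O_{\pi'}(G))$ is cyclic generated by the image of $x$; hence $H\cap O_{\pi'}(G)$ is a normal $\pi$-complement and equals $O_{\pi'}(H)$. The second clause then follows because $[O_{\pi'}(H),x]\cap C_G(x)\leq O_{\pi'}(G)\cap C_G(x)$, which \emph{is} a normal $\pi'$-subgroup of $C_G(x)$ and hence lies in $O_{\pi'}(C_G(x))$; your claim that $[O_{\pi'}(H),x]\cap C_G(x)$ is itself normal in $C_G(x)$ is unjustified and unnecessary.

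The part that actually matters for this paper is the final claim, and your last paragraph is where the argument is weakest. The phrase ``two $\pi$-elements of $K$ congruent mod $N$ are conjugate by an element of $N$'' is not a theorem, and the commutator condition from (ii) plays no role here. The clean argument from (i) is: with $y=x^n$, $n\in O_{\pi'}(G)$, put $K=\langle x,y\rangle$ and $N=K\cap O_{\pi'}(G)$; then $N$ is a normal Hall $\pi'$-subgroup of $K$ with $K/N$ cyclic generated by the common image of $x$ and $y$, so $\langle x\rangle$ and $\langle y\rangle$ are both complements to $N$ in $K$ and are conjugate by the Schur--Zassenhaus theorem (whose conjugacy part applies because $K/N$ is cyclic, hence solvable). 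If $\langle x\rangle^h=\langle y\rangle$ with $h\in K$, then since both complements map isomorphically onto $K/N$ and $x^hN=xN=yN$, one gets $x^h=y$ exactly, which is the $\star$-property. So your overall strategy can be repaired, but as written the (i) $\Rightarrow$ (ii) intersection step and the final conjugacy step are gaps; the paper itself sidesteps all of this by citing Robinson's proof directly.
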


The last statement of Theorem \ref{pi'2} is not contained in the original claim in \cite{Robinson}. Nonetheless, as a step of the proof of the implication (ii) $\Rightarrow$ (i) (at the beginning of page 716) the statement is verified to be true.

\begin{remark}\label{rem on complement}
Let $G$ be a group and $x\in G$ be a $\pi$-element. If $\langle x\rangle$ has a normal $\pi$-complement $N$ then $N\leq O_{\pi'}(G)$ and $x\in C_G(x)$.  Thus $G=N\langle x\rangle=O_{\pi'}(G)C_G(x)$.
\end{remark}

%
%
%


Let us consider the case of conjugacy classes of elements of prime power order.

\begin{theorem}\label{O_p'} 
Let $G$ be a group, $p$ a prime and $x\in G$ be a $p$-element of $G$.  The following are equivalent:
\begin{itemize}
\item[(i)] $x^G\cap C_G(x)=\{x\}$.

\item[(ii)] $G=O_{p'}(G)C_G(x)$.

\item[(iii)] $x^G$ has the $\star$-property.

\end{itemize}

\begin{proof}
(i) $\Rightarrow$ (ii)   Let $P$ be a $p$-Sylow subgroup containing $x$. Let us show that $x\in Z(P)$. Consider the conjugate of $x$ in $P$, i.e. $C=\setof{x^y}{y\in P}$. The element $x$ acts on $C$ by conjugation and since $x^G\cap C_G(x)=\{x\}$ then it fixes just $x$. Therefore $|C|=1\pmod p$. But this implies that $x\in Z(P)$. Therefore $x^G\cap P\leq x^G\cap C_G(x)=\{x\}$.  Therefore we can apply Theorem 3.1 of \cite{GGLN} and we get that $G=O_{p'}(G)C_G(x)$.

(ii) $\Rightarrow$ (iii) It follows by Theorem \ref{pi'2}.

(iii) $\Rightarrow$ (i) According to Lemma \ref{star implies N=C} then $x^G\cap C_G(x)=\{x\}$.
\end{proof}
%

%
\end{theorem}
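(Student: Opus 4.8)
The plan is to establish the cycle of implications (i) $\Rightarrow$ (ii) $\Rightarrow$ (iii) $\Rightarrow$ (i), leaning on the $\pi$-element machinery already in place for $\pi=\{p\}$. Two of the three implications are essentially free. For (ii) $\Rightarrow$ (iii): a $p$-element is a $\pi$-element with $\pi=\{p\}$, so $O_{p'}(G)=O_{\pi'}(G)$ and hypothesis (ii) is exactly condition (i) of Theorem \ref{pi'2}; the final sentence of that theorem then gives that $x^G$ has the $\star$-property. For (iii) $\Rightarrow$ (i): apply Lemma \ref{star implies N=C} with $y=x$, which gives $x^G\cap C_G(x)=\{x\}$ at once.

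The substance is in (i) $\Rightarrow$ (ii), and here the plan is to use that $x$ has prime power order to push $x$ into the centre of a Sylow $p$-subgroup. Fix a Sylow $p$-subgroup $P\leq G$ with $x\in P$ and let $\langle x\rangle$ act by conjugation on the $P$-orbit $C=\setof{x^g}{g\in P}$ of $x$ (this is an action since $xg\in P$ whenever $g\in P$). A point of $C$ is fixed by $x$ precisely when it centralises $x$, so the fixed-point set is $C\cap C_G(x)\subseteq x^G\cap C_G(x)=\{x\}$; as $\langle x\rangle$ is a $p$-group, every non-fixed orbit has size a positive power of $p$, whence $|C|\equiv 1\pmod p$. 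But $|C|=[P:C_P(x)]$ is a power of $p$, so $|C|=1$ and $x\in Z(P)$. Then $P\leq C_G(x)$, and therefore $x^G\cap P\subseteq x^G\cap C_G(x)=\{x\}$. At this point I would invoke \cite[Theorem 3.1]{GGLN}: for a $p$-element $x$ contained in a Sylow $p$-subgroup $P$ with $x^G\cap P=\{x\}$, one has $G=O_{p'}(G)C_G(x)$, which is exactly (ii).

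The main obstacle is precisely this last appeal. Reducing (i) to the local condition ``$x$ is isolated in a Sylow $p$-subgroup'' is routine orbit counting, but passing from $x^G\cap P=\{x\}$ to the global factorisation $G=O_{p'}(G)C_G(x)$ is a transfer argument of focal-subgroup / Gr\"un type, so one must make sure the hypotheses of the cited theorem match the present setup. A self-contained alternative would be to analyse the transfer homomorphism from $G$ into a suitable abelian section of $P$ directly and show $x$ generates the relevant image, but citing \cite{GGLN} is the cleaner route.

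Finally, I would note that the three conditions are equivalent without any restriction on the prime; the degenerate case $x=1$ is trivial (all three statements hold), so one may assume $x\neq 1$ throughout the argument above if convenient, though it is not actually needed.
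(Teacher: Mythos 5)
Your proposal is correct and follows essentially the same route as the paper: the orbit-counting argument placing $x$ in $Z(P)$, the appeal to \cite[Theorem 3.1]{GGLN} for (i) $\Rightarrow$ (ii), Theorem \ref{pi'2} with $\pi=\{p\}$ for (ii) $\Rightarrow$ (iii), and Lemma \ref{star implies N=C} for (iii) $\Rightarrow$ (i). The only difference is that you make explicit the step the paper leaves implicit, namely that $|C|=[P:C_P(x)]$ is a $p$-power congruent to $1$ modulo $p$ and hence equals $1$.
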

%
%

%
\begin{proposition}\label{star prop and complement}
Let $G$ be a group and let $\langle x\rangle $ be a $p$-Sylow subgroup of $G$. Then the following are equivalent:
\begin{itemize}
\item[(i)] $x^G$ has the $\star$-property.
\item[(ii)] $\langle x\rangle$ has a normal $p$-complement,  i.e. $G\cong H\rtimes \langle x\rangle$ for some normal subgroup $H$ such that $|H|$ and $p$ are coprime.
\end{itemize}
\end{proposition}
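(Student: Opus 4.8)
The plan is to deduce the statement from Theorem \ref{O_p'} together with Burnside's normal $p$-complement theorem, which will be the only external input. Since $\langle x\rangle$ is a Sylow $p$-subgroup of $G$ it is in particular a (cyclic) $p$-group, so $x$ is a $p$-element of $G$, and hence Theorem \ref{pi'2}, Theorem \ref{O_p'} and Remark \ref{rem on complement} all apply to the class $x^G$.

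For the implication (ii) $\Rightarrow$ (i) I would not even need the Sylow hypothesis: if $\langle x\rangle$ has a normal $p$-complement, then Remark \ref{rem on complement} gives $G=O_{p'}(G)C_G(x)$, and the final assertion of Theorem \ref{pi'2} (equivalently the implication (ii) $\Rightarrow$ (iii) of Theorem \ref{O_p'}) says precisely that $x^G$ then has the $\star$-property.

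For the substantive direction (i) $\Rightarrow$ (ii), I would start from the $\star$-property and apply Lemma \ref{star implies N=C} with $y=x$, which yields $C_G(x)=N_G(\langle x\rangle)$. Since $\langle x\rangle$ is generated by $x$ we also have $C_G(\langle x\rangle)=C_G(x)$, so the Sylow subgroup $P:=\langle x\rangle$ satisfies $N_G(P)=C_G(P)$, i.e. $P$ lies in the centre of its normaliser. Burnside's normal $p$-complement theorem then produces a normal subgroup $K\trianglelefteq G$ with $|K|$ coprime to $p$ and $G/K$ a $p$-group of order $|P|$; coprimality forces $K\cap P=1$ and hence $G=KP=K\langle x\rangle$, exhibiting $K$ as a normal $p$-complement of $\langle x\rangle$, which is (ii).

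I expect the only point requiring genuine care to be the invocation of Burnside's theorem, and specifically the verification that the $\star$-property really delivers its hypothesis $N_G(P)=C_G(P)$ for the Sylow subgroup $P$ — this is exactly where Lemma \ref{star implies N=C} and the cyclicity of $P$ are both used. Everything else is routine order counting together with citations to the earlier theorems.
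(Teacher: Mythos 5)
Your proof is correct, and the direction (i) $\Rightarrow$ (ii) is exactly the paper's argument: Lemma \ref{star implies N=C} gives $N_G(\langle x\rangle)=C_G(x)$, hence $N_G(P)=C_G(P)$ for the cyclic Sylow subgroup $P=\langle x\rangle$, and Burnside's normal $p$-complement theorem finishes. For (ii) $\Rightarrow$ (i) you diverge from the paper: you route through Remark \ref{rem on complement} to get $G=O_{p'}(G)C_G(x)$ and then invoke the final assertion of Theorem \ref{pi'2} (equivalently Theorem \ref{O_p'}), whereas the paper gives a direct, self-contained argument -- for $y\in x^G$ the subgroups $\langle x\rangle$ and $\langle y\rangle$ are Sylow $p$-subgroups of $H=\langle x,y\rangle$, hence conjugate by some $h\in H$, and then $hxh^{-1}\in y^G\cap C_G(y)=\{y\}$ forces $y=hxh^{-1}$, which is precisely the $\star$-property. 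Your version buys brevity and makes explicit that this direction needs no Sylow hypothesis (only that $x$ is a $p$-element whose cyclic subgroup has a normal $p$-complement); the paper's version buys independence from the machinery of Theorems \ref{pi'2} and \ref{O_p'}, producing the conjugating element inside $\langle x,y\rangle$ by hand. Both are sound.
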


\begin{proof}
(i) $\Rightarrow$ (ii) According to Lemma \ref{star implies N=C} $C_G(x)=N_G(\langle x\rangle)$. Therefore by the Burnside $p$-complement theorem $\langle x\rangle$ has a normal complement \cite[Theorem II, section 243]{Burn}. 

(ii) $\Rightarrow$ (i) Let $N$ be the normal complement of $\langle x\rangle$ and let $y\in x^G$. Then $\langle y\rangle$ is also a $p$-Sylow subgroup of $G$. In particular $\langle x\rangle$ and $\langle y\rangle$ are $p$-Sylow subgroups of $H=\langle x,y\rangle$ and so they are conjugate by an element $h\in H$. In particular $hxh^{-1}\in y^G\cap C_G(y)=\{y\}$, therefore $x$ and $y$ are conjugate in $H$.
\end{proof}

%

\subsection{Superfaithful and superconnected conjugation quandles}

Let $G$ be a group and $H$ be a subset of $G$ closed under conjugation. As mentioned earlier the set $H$ endowed with the conjugation operation is a quandle denoted by $Conj(H)$. The main examples of such quandles are conjugacy classes in groups.  If $x\in G$ we simply denote the conjugation quandle over the conjugacy class of $x$ in $G$ by $x^G$ rather than by $Conj(x^G)$.

Let $\mathcal{C}\subseteq G$ be a conjugation quandle and $H=\langle \mathcal{C}\rangle$. Then $L_x=\widehat{x}|_{\mathcal{C}}$ for every $x\in \mathcal{C}$, where $\widehat{x}$ denote the inner automorphism of $G$ with respect to $x$.

A quandle $Q$ is superfaithful if and only if $Fix(L_x)=\{x\}$ for every $x\in Q$ \cite[Lemma 2.3]{Super}. Let us study such condition for conjugation quandles.

\begin{lemma}\label{super then C=N}
Let $\mathcal{C}$ be a conjugation quandle and $H=\langle \mathcal{C}\rangle$.  If $\mathcal{C}$ is superfaithful then $N_H(\langle x\rangle)=C_H(x)$ is self-normalizing and $\mathcal{C}\cap N_H(\langle x\rangle)=\{x\}$ for every $x\in \mathcal{C}$.
\end{lemma}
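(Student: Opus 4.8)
The plan is to unwind what superfaithfulness means for the conjugation quandle $\mathcal{C}$ and translate it into the group-theoretic conclusions about $H=\langle\mathcal{C}\rangle$. By \cite[Lemma 2.3]{Super} (cited just above the statement), $\mathcal{C}$ superfaithful is equivalent to $Fix(L_x)=\{x\}$ for every $x\in\mathcal{C}$, and since $L_x=\widehat{x}|_{\mathcal{C}}$, this says precisely that the only element of $\mathcal{C}$ commuting with $x$ is $x$ itself, i.e. $\mathcal{C}\cap C_H(x)=\{x\}$ for all $x\in\mathcal{C}$. So the first step is to record this reformulation.

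Next I would derive $N_H(\langle x\rangle)=C_H(x)$. The inclusion $C_H(x)\leq N_H(\langle x\rangle)$ is automatic. For the reverse, take $y\in N_H(\langle x\rangle)$ and set $z=yxy^{-1}$; then $z\in\langle x\rangle$, so $z$ commutes with $x$, and also $z\in x^H\subseteq\mathcal{C}$ since $\mathcal{C}$ is closed under conjugation (being a conjugation quandle) — here I should be slightly careful: $\mathcal{C}$ is a subset of $G$ closed under conjugation, and $y\in H$, so $z=yxy^{-1}\in\mathcal{C}$. Thus $z\in\mathcal{C}\cap C_H(x)=\{x\}$, giving $yxy^{-1}=x$, i.e. $y\in C_H(x)$. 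This is exactly the argument pattern used in Lemma \ref{star implies N=C}, and it is the heart of the proof.

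For the claim that $N_H(\langle x\rangle)$ is self-normalizing, I would argue that $N_H(N_H(\langle x\rangle)) = N_H(C_H(x))$ normalizes $C_H(x)$, hence also normalizes its center, and in particular sends $x\in Z(C_H(x))$ to another element of the same subgroup; alternatively, and more cleanly, since $x$ generates the unique conjugacy class representative inside $C_H(x)\cap\mathcal{C}$, any $g$ normalizing $C_H(x)=N_H(\langle x\rangle)$ satisfies $gxg^{-1}\in\mathcal{C}\cap N_H(\langle x\rangle)=\{x\}$ (using the last clause of the statement, which is the reformulation from step one once we know $N_H(\langle x\rangle)=C_H(x)$), hence $g\in C_H(x)=N_H(\langle x\rangle)$. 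So $N_H(N_H(\langle x\rangle))=N_H(\langle x\rangle)$. Finally the clause $\mathcal{C}\cap N_H(\langle x\rangle)=\{x\}$ is immediate from $N_H(\langle x\rangle)=C_H(x)$ together with the step-one reformulation.

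The only mildly delicate point is the self-normalizing claim, since it is the one place the argument is not a one-line unwinding; but as indicated above it reduces to the same "$\mathcal{C}\cap C_H(x)=\{x\}$" principle applied to a normalizer of a centralizer, so I do not expect a genuine obstacle. Everything else is bookkeeping with conjugation.
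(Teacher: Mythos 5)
Your proposal is correct and follows essentially the same route as the paper: reformulate superfaithfulness as $\mathcal{C}\cap C_H(x)=\{x\}$, deduce $N_H(\langle x\rangle)=C_H(x)$ by noting that $yxy^{-1}\in\langle x\rangle\cap\mathcal{C}$ commutes with $x$, and obtain self-normalization by applying the same principle to $x^g$ for $g\in N_H(C_H(x))$. The paper's proof is the same argument in slightly different notation (it writes $gxg^{-1}=x^n$ and uses $x^n*x=x$), so there is nothing further to compare.
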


\begin{proof}
The quandle $\mathcal{C}$ is superfaithful, so if $y* x=yxy^{-1}=x$ then $y=x$.  In particular $\mathcal{C}\cap C_H(x)=\{x\}$.  Assume that $g\in N_H(\langle x\rangle)$, i.e.  $g* x=gxg^{-1}=x^n$ for some $n\in\mathbb{N}$.  Then $x^n* x=x$, i.e. $x^n=x$ and so $g\in C_G(x)$.  Let $g\in N_H(C_H(x))$.  Then $x^g\in \mathcal{C}\cap C_H(\langle x\rangle)=\{x\}$, and so $g\in C_H(x)$.
%
%
\end{proof}

\begin{lemma}
Let $G$ be a group and $x\in G$. If $x^G\cap N_G(\langle x\rangle)=\{x\}$ then $x^G$ is superfaithful.
\end{lemma}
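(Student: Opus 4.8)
The plan is to reduce superfaithfulness of the conjugation quandle $x^G$ to the hypothesis $x^G \cap N_G(\langle x\rangle) = \{x\}$ by working inside the subgroup generated by the conjugacy class, and then invoke the characterization of superfaithful quandles in terms of the fixed points of left translations. Set $H = \langle x^G\rangle$; note $H$ is normal in $G$ (it is generated by a conjugacy class) and $x^G$ is still a single conjugacy class computed inside $H$ up to the usual subtlety, but in any case the left translations of the quandle $x^G$ are the restrictions $\widehat{y}|_{x^G}$ for $y \in x^G$, as recorded in the excerpt. By the cited \cite[Lemma 2.3]{Super}, it suffices to show that for every $y \in x^G$, $Fix(L_y) = \{y\}$, i.e.\ the only element of $x^G$ centralized by $y$ is $y$ itself.

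First I would observe that it is enough to treat $y = x$: for a general $y \in x^G$ we have $y = gxg^{-1}$ for some $g\in G$, and conjugation by $g$ carries the pair $(x^G, x)$ to $(x^G, y)$ and carries $N_G(\langle x\rangle)$ to $N_G(\langle y\rangle)$, so the hypothesis $x^G \cap N_G(\langle x\rangle) = \{x\}$ transports to $x^G \cap N_G(\langle y\rangle) = \{y\}$, and the conclusion $Fix(L_y) = \{y\}$ will follow once $Fix(L_x) = \{x\}$ is established. So I fix attention on $x$.

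Next, the core step: suppose $z \in x^G$ and $z \in Fix(L_x)$, meaning $xzx^{-1} = z$, i.e.\ $z \in C_G(x)$. I want to conclude $z = x$. The idea is to promote the commuting relation to a normalizing relation so that the hypothesis applies. Since $z$ commutes with $x$, certainly $z$ normalizes $\langle x\rangle$ (conjugation by $z$ fixes $x$, hence fixes every power of $x$). Therefore $z \in x^G \cap N_G(\langle x\rangle) = \{x\}$, so $z = x$. This shows $Fix(L_x) = \{x\}$, and by the reduction above $Fix(L_y) = \{y\}$ for every $y \in x^G$. Applying \cite[Lemma 2.3]{Super} gives that $x^G$ is superfaithful, as desired.

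I do not anticipate a serious obstacle here; the argument is short and essentially a bookkeeping exercise, using that $C_G(x) \subseteq N_G(\langle x\rangle)$ and the conjugacy-invariance of the hypothesis. The only point that warrants a line of care is the passage from the full group $G$ to $H = \langle x^G\rangle$ when quoting that left translations of the quandle are restrictions of inner automorphisms; but this is exactly the remark stated just before Lemma \ref{super then C=N} in the excerpt, so it can be cited rather than reproved. One could even bypass $H$ entirely and argue directly in $G$, since $Fix(L_y)$ is computed on the set $x^G$ regardless of the ambient group.
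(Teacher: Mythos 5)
Your proof is correct and follows essentially the same route as the paper's: both arguments use the fixed-point criterion for superfaithfulness, the inclusion $C_G(x)\subseteq N_G(\langle x\rangle)$ to land in the hypothesis set $x^G\cap N_G(\langle x\rangle)=\{x\}$, and conjugacy-invariance to pass from $x$ to an arbitrary element of the class. Your write-up is merely more explicit about the reduction to $y=x$ and about where the criterion $Fix(L_y)=\{y\}$ is invoked.
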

\begin{proof}
Let $y*x=x$ for $y\in \mathcal{C}$. Then there exists $g\in G$ such that $y=x^g\in x^G\cap C_G(x)=\{x\}$.  Therefore $x^g=x$,  i.e.  $Q$ is superfaithful.
\end{proof}
%

Let us introduce a criterion to check if a quandle is superconnected.
\begin{proposition}\label{xy conjugated}
Let $\mathcal{C}$ be a conjugation quandle and $H=\langle \mathcal{C}\rangle$.  The following are equivalent:
\begin{itemize}
\item[(i)] $\mathcal{C}$ is superconnected 
\item[(ii)] $\mathcal{C}=x^H$ and has the $\star$-property.
\end{itemize}
\end{proposition}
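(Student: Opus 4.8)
The plan is to describe the subalgebras of $\mathcal{C}$ together with their left multiplication groups, and then to deduce both implications from the $\star$-property. The first thing I would record is the following. If $S\subseteq\mathcal{C}$ is a subalgebra then $S$ is closed under $v\mapsto uvu^{-1}$ and under $v\mapsto u^{-1}vu$ for every $u\in S$, hence $S$ is invariant under conjugation by every element of the group $\langle S\rangle$ generated by $S$. Therefore $\langle S\rangle$ acts on $S$ by conjugation; the image of this action in $\sym{S}$ is generated by the maps $v\mapsto uvu^{-1}$ with $u\in S$, so it is exactly $\lmlt(S)$; and consequently the $\lmlt(S)$-orbit of any $z\in S$ coincides with the full conjugacy class $z^{\langle S\rangle}$. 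In particular a subalgebra $S$ is connected if and only if $S=z^{\langle S\rangle}$ for one (equivalently for every) $z\in S$, and applied to the subalgebra $\mathcal{C}$ itself, where $\langle\mathcal{C}\rangle=H$, this says that $\mathcal{C}$ is connected if and only if $\mathcal{C}=x^{H}$.

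The second ingredient is a symmetrised form of the $\star$-property: if the conjugacy class $x^{H}$ of $H$ has the $\star$-property, then for all $z,w\in x^{H}$ there is $k\in\langle z,w\rangle$ with $w=kzk^{-1}$. To prove it I would write $z=gxg^{-1}$ with $g\in H$, observe that $g^{-1}wg\in x^{H}$, apply the $\star$-property to obtain $h\in\langle x,g^{-1}wg\rangle$ with $g^{-1}wg=hxh^{-1}$, and conjugate by $g$: this gives $w=kzk^{-1}$ with $k=ghg^{-1}\in g\langle x,g^{-1}wg\rangle g^{-1}=\langle z,w\rangle$. This step is needed because a subalgebra of $x^{H}$ need not contain the base point $x$, so the $\star$-property cannot be applied there verbatim.

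Granting these two facts the proposition becomes short. For (i)$\Rightarrow$(ii): $\mathcal{C}$ is a subalgebra of itself, hence connected, hence $\mathcal{C}=x^{H}$ by the first ingredient; and for any $y\in\mathcal{C}$ the subalgebra $S$ generated by $\{x,y\}$ is connected and satisfies $\langle S\rangle=\langle x,y\rangle$ (a routine induction shows $S\subseteq\langle x,y\rangle$, and $x,y\in S$ gives the reverse inclusion), so $S=x^{\langle x,y\rangle}$; since $y\in S$ this yields an $h\in\langle x,y\rangle$ with $y=hxh^{-1}$, which is precisely the $\star$-property for $x^{H}$. For (ii)$\Rightarrow$(i): let $S\subseteq\mathcal{C}$ be a nonempty subalgebra and fix $z\in S$; for any $w\in S$ the symmetrised $\star$-property provides $k\in\langle z,w\rangle\leq\langle S\rangle$ with $w=kzk^{-1}$, so $w$ lies in the $\lmlt(S)$-orbit of $z$; hence $\lmlt(S)$ is transitive on $S$, every subalgebra of $\mathcal{C}$ is connected, and $\mathcal{C}$ is superconnected.

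I expect the only genuinely delicate point to be getting the first ingredient exactly right — identifying $\lmlt(S)$ with the image of the conjugation action of $\langle S\rangle$ on the $\langle S\rangle$-invariant set $S$, so that orbits of $\lmlt(S)$ are conjugacy classes in $\langle S\rangle$. Once that is in place, both implications are just the $\star$-property combined with the elementary identity $\langle S\rangle=\langle x,y\rangle$ for two-generated subalgebras and the symmetrisation trick above.
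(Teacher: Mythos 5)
Your proof is correct. It rests on the same basic identification as the paper's argument --- for a subalgebra $S$ of a conjugation quandle, $\lmlt(S)$ is the image of the conjugation action of $\langle S\rangle$ on the $\langle S\rangle$-invariant set $S$, so that connectedness of $S$ means $S=z^{\langle S\rangle}$ --- but it is organised differently and is more self-contained. The paper first invokes the external criterion that a quandle is superconnected if and only if every $2$-generated subquandle is connected (from \cite{Super}), and then only ever works with subquandles generated by pairs $\{x,y\}$, citing two lemmas of \cite{semimedial} for the description of $\lmlt(S)$ and its orbits; you instead prove the orbit description from scratch and, in the direction (ii)$\Rightarrow$(i), handle an arbitrary subalgebra $S$ directly. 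To do that you need the $\star$-property not just at the base point $x$ but at every pair $z,w\in x^{H}$, and your explicit symmetrisation lemma (transporting the property from $x$ to $z=gxg^{-1}$ by conjugating the witness $h$ by $g$, noting $g\langle x,g^{-1}wg\rangle g^{-1}=\langle z,w\rangle$) supplies exactly this; the paper leaves that step implicit. What your route buys is independence from the $2$-generated-subquandle criterion and an explicit record of where base-point independence of the $\star$-property is used; what it costs is having to reprove the (easy) facts that a subalgebra is $\langle S\rangle$-invariant, that $\lmlt(S)$ is the image of $\langle S\rangle$ in $\sym{S}$, and that $\langle S\rangle=\langle x,y\rangle$ for the subalgebra generated by $\{x,y\}$ --- all of which you do correctly.
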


\begin{proof}
A quandle is superconnected if and only if every $2$-generated subquandle is connected \cite{Super}. The subquandle $S$ generated by $x,y\in \mathcal{C}$ is connected if and only if $x$ and $y$ are in the same orbit with respect to the action of $\lmlt(S)$ \cite[Corollary 5.5]{semimedial}. According to \cite[Lemma 5.3]{semimedial} $\lmlt(S)$ is generated by $L_x=\widehat{x}|_S$ and $L_y=\widehat{y}|_S$. Therefore $S$ is connected if and only if there exists $h\in \langle x,y\rangle$ such that $y=hxh^{-1}$.

In particular $\mathcal{C}$ is connected and so $\mathcal{C}=x^H$ for some $x\in \mathcal{C}$.
\end{proof}


We conclude this subsection applying the group theoretical results developed at the beginning of this section to display some examples of superconnected quandle.

\begin{proposition}\label{semidirect}
Let $Q$ be a quandle,  $x\in Q$ such that $|L_x|$ and $|\dis(Q)|$ are coprime. Then: 

\begin{itemize}
\item[(i)] the orbit of $L_x$ in $Q/\lambda_Q$ is superconnected.
\item[(ii)] If $Q$ is connected then $Q/\lambda_Q$ is superconnected. 
\end{itemize}
\end{proposition}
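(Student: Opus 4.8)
The plan is to realise the orbit in question as a conjugation quandle over a single conjugacy class of $\lmlt(Q)$, extract the $\star$-property for that class from the coprimality hypothesis via Theorem \ref{pi'2}, and then conclude with Proposition \ref{xy conjugated}. Observe first that (ii) is an immediate consequence of (i): if $Q$ is connected then $\lmlt(Q)$ is transitive, so the $\lmlt(Q)$-orbit $O_x$ of $x$ is all of $Q$, hence the orbit of $[x]_{\lambda_Q}$ in $Q/\lambda_Q$ is all of $Q/\lambda_Q$. So I concentrate on (i).

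For the identification, write $G=\lmlt(Q)$. The canonical surjection $\pi_{\lambda_Q}\colon G\to\lmlt(Q/\lambda_Q)$ sends $L_y$ to $L_{[y]}$, so the $\lmlt(Q/\lambda_Q)$-orbit of $[x]$ is $\{[y]:y\in O_x\}$, and the assignment $[y]\mapsto L_y$ is a well-defined bijection of this orbit onto $\{L_y:y\in O_x\}=L_x^{G}$ (recall $gL_xg^{-1}=L_{g(x)}$ since $g\in\aut{Q}$). Because $L_yL_zL_y^{-1}=L_{L_y(z)}=L_{y*z}$ in any quandle, this bijection is an isomorphism of quandles from the orbit of $[x]$ onto $Conj(L_x^{G})$. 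Thus it suffices to prove that $Conj(L_x^{G})$ is superconnected.

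Now for the group theory. Put $N=\dis(Q)$, a normal subgroup of $G$ with $G=N\langle L_x\rangle$, and let $\pi$ be the set of primes dividing $|L_x|$; by hypothesis $\gcd(|L_x|,|N|)=1$, so $N$ is a normal $\pi$-complement of $\langle L_x\rangle$ in $G$ and $L_x$ is a $\pi$-element. Remark \ref{rem on complement} then gives $G=O_{\pi'}(G)\,C_G(L_x)$, so Theorem \ref{pi'2} applies and $L_x^{G}$ has the $\star$-property. Finally set $H=\langle L_x^{G}\rangle$: for every $y\in L_x^{G}$ the $\star$-property furnishes $h\in\langle L_x,y\rangle\le H$ with $y=hL_xh^{-1}$, so $L_x^{G}=L_x^{H}$; and the $\star$-property of a conjugacy class is a statement only about the subgroups $\langle x,y\rangle$, hence it holds for $L_x^{H}$ as well. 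Proposition \ref{xy conjugated} now yields that $Conj(L_x^{G})=Conj(L_x^{H})$ is superconnected, which proves (i).

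I do not expect a serious obstacle here; the delicate point is simply making sure the $\star$-property is available \emph{in the right group}, i.e.\ for $L_x^{H}$ with $H=\langle L_x^{G}\rangle$ rather than only for $L_x^{G}$ in $G$ — this is exactly why I first record $L_x^{G}=L_x^{H}$ before invoking Proposition \ref{xy conjugated}. The remaining items are routine bookkeeping: that $[y]\mapsto L_y$ is well defined ($[y]=[z]$ means precisely $L_y=L_z$) and is a quandle morphism, that $\{L_y:y\in O_x\}$ coincides with the conjugacy class $L_x^{G}$, and that $\dis(Q)$ is genuinely a \emph{normal} $\pi$-complement of $\langle L_x\rangle$, which follows at once from $\lmlt(Q)=\dis(Q)\langle L_x\rangle$ together with the normality of $\dis(Q)$ in $\lmlt(Q)$.
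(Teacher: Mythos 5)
Your proof is correct and follows essentially the same route as the paper's: identify the orbit of $L_x$ in $Q/\lambda_Q$ with the conjugation quandle on the conjugacy class $L_x^{\lmlt(Q)}$, observe that $\dis(Q)$ is a normal $\pi$-complement of $\langle L_x\rangle$ by the coprimality hypothesis, invoke Remark \ref{rem on complement} and Theorem \ref{pi'2} to obtain the $\star$-property, and conclude with Proposition \ref{xy conjugated}. The only difference is that you spell out details the paper leaves implicit (the explicit isomorphism $[y]\mapsto L_y$ and the passage from $L_x^{G}$ to $L_x^{H}$ with $H=\langle L_x^{G}\rangle$), which is sound bookkeeping rather than a new idea.
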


\begin{proof}
(i) According to Proposition \ref{xy conjugated} we just need to show that $L_x^{\lmlt(Q)}$ has the $*$-property. The orbit of $L_x$ in $Q/\lambda_Q$ is isomorphic to the conjugacy class of $L_x$.  The group $\dis(Q)$ is a complement to $\langle L_x\rangle$. Hence, according to Remark \ref{rem on complement} we can apply Theorem \ref{pi'2}.

(ii) It follows by (i) since the orbit of $L_x$ coincide with $Q/\lambda_Q$.
\end{proof}

\begin{corollary}\label{semidirect2}
Let $Q$ be a quandle,  $x\in Q$ such that $|L_x|$ and $|\dis(Q)|$ are coprime. Then $Q$ is superconnected if and only if $Q$ is connected and faithful. 
\end{corollary}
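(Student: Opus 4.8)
The plan is to prove the two implications of the equivalence separately, observing that only the ``if'' direction actually needs the coprimality hypothesis.

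For the direction ``superconnected $\Rightarrow$ connected and faithful'', connectedness is immediate since $Q$ is a subalgebra of itself. For faithfulness I would argue by contradiction: suppose $\lambda_Q\neq 0_Q$, so there are $x\neq y$ with $L_x=L_y$. Using idempotency one gets $x*y=L_x(y)=L_y(y)=y*y=y$ and $y*x=L_y(x)=L_x(x)=x*x=x$, and likewise $x\ldiv y=L_x^{-1}(y)=y$, $y\ldiv x=x$ (both $L_x$ and $L_y$ fix $x$ and $y$). Hence $S=\{x,y\}$ is a subquandle, but $\lmlt(S)=\langle L_x|_S,L_y|_S\rangle$ is trivial and so not transitive on the two-element set $S$; thus $S$ is not connected, contradicting superconnectedness of $Q$. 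Therefore $\lambda_Q=0_Q$, i.e. $Q$ is faithful. (This half works for every quandle.)

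For the converse, assume $Q$ is connected and faithful. Then $\lambda_Q=0_Q$, so the canonical projection $Q\longrightarrow Q/\lambda_Q$ is an isomorphism. Applying Proposition \ref{semidirect}(ii) to the given element $x$, whose order is coprime to $|\dis(Q)|$, the quotient $Q/\lambda_Q$ is superconnected, and hence so is $Q$.

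I do not expect a real obstacle here: the statement is essentially a repackaging of Proposition \ref{semidirect}(ii), and the only point needing a little care is the remark that whenever $L_x=L_y$ with $x\neq y$ the pair $\{x,y\}$ is automatically a disconnected two-element subquandle — this is exactly what forces a superconnected quandle to be faithful, and it is why ``connected $+$ faithful'' is the right hypothesis to complement the coprimality condition.
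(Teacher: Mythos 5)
Your proof is correct and is essentially the argument the paper intends (the corollary is stated without proof, as an immediate consequence of Proposition \ref{semidirect}): the forward direction is the standard observation that a superconnected quandle is faithful because any pair $x\neq y$ with $L_x=L_y$ spans a two-element projection subquandle with trivial left multiplication group, and the converse is Proposition \ref{semidirect}(ii) combined with the isomorphism $Q\cong Q/\lambda_Q$ coming from faithfulness.
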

%

\begin{corollary}\label{U}
Let $Q$ be a quandle, $x\in Q$ and $|L_x|=n m$. If $|\dis(Q)|$ and $n$ are coprime then the orbit of $L_x^m$ in $Q_m/\lambda_{Q_m}$ is superconnected.
\end{corollary}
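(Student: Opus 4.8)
The plan is to reduce to an application of Proposition \ref{semidirect}(i) carried out inside the derived left quasigroup $Q_m$. First I would recall the setup: by definition $Q_m = (Q,*_m,\ldiv_m)$ with $x*_m y = L_x^m(y)$, so the left translation of $x$ in $Q_m$ is exactly $L_x^m$. Consequently $\lmlt(Q_m) = \langle L_x^m : x\in Q\rangle = \dis(Q_m)$, and by the lemma just before Section \ref{Sconj} we have $\dis(Q_m) = \langle L_x L_y^{-1} : x,y\in Q\rangle \trianglelefteq \lmlt(Q)$; in particular $\dis(Q_m) \leq \dis(Q)$. I would also check that $Q_m$ is a quandle (idempotence is inherited since $L_x^m(x)=x$, and the left-distributive law for $*_m$ follows from $L_{h(x)}^m = hL_x^m h^{-1}$ applied with $h\in\lmlt(Q)$, exactly as in the cited lemma), so that Proposition \ref{semidirect} is applicable to $Q_m$.

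The key numerical point is that the left translation of $x$ in $Q_m$, namely $L_x^m$, has order dividing $|L_x|/\gcd(|L_x|,m)$; since $|L_x| = nm$ one gets $|L_x^m| \mid n$, hence $|L_x^m|$ is coprime to $n$... more precisely $|L_x^m|$ divides $n$ and, by hypothesis, $n$ is coprime to $|\dis(Q)|$. Since $\dis(Q_m)\leq\dis(Q)$, the order $|\dis(Q_m)|$ divides $|\dis(Q)|$ and is therefore coprime to $n$, hence coprime to $|L_x^m|$. Thus $x\in Q_m$ satisfies the hypothesis of Proposition \ref{semidirect}(i) relative to the quandle $Q_m$: $|L_x^m|$ and $|\dis(Q_m)|$ are coprime. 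Applying that proposition gives that the orbit of the left translation of $x$ in $Q_m/\lambda_{Q_m}$ — that is, the orbit of $L_x^m$ — is superconnected, which is the assertion.

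The only real subtlety, and the step I would be most careful about, is the bookkeeping identification of "$L_x^m$ viewed as the left translation in $Q_m$" with "$L_x^m$ viewed as an element of $\lmlt(Q)$," so that the orbit of $L_x^m$ in $Q_m/\lambda_{Q_m}$ as produced by Proposition \ref{semidirect}(i) genuinely coincides with the object named in the statement; this is a matter of unwinding that the Cayley kernel $\lambda_{Q_m}$ and the $\lmlt(Q_m)$-action are computed with the translations $L_z^m$, which is immediate from the definition of $Q_m$. Everything else is a direct substitution into the already-proved Proposition \ref{semidirect}(i), using only the divisibility $|L_x^m|\mid n$ and the containment $\dis(Q_m)\leq\dis(Q)$ to transfer the coprimality hypothesis.
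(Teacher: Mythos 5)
Your argument is correct and follows the paper's own proof exactly: both reduce to Proposition \ref{semidirect}(i) applied to $Q_m$, using that $|L_x^m|$ divides $n$ (in fact $|L_x^m|=nm/\gcd(nm,m)=n$) and that $\dis(Q_m)\leq\dis(Q)$ transfers the coprimality hypothesis. The only blemish is your incidental claim that $\lmlt(Q_m)=\dis(Q_m)$, which is false in general (one only has $\lmlt(Q_m)=\dis(Q_m)\langle L_x^m\rangle$), but nothing in your argument actually uses it.
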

\begin{proof}
Since $|\dis(Q)|$ and $|L_x^m|=n$ are coprime then $n$ is also coprime with the order of $\dis(Q_m)\leq \dis(Q)$. Then we can apply Proposition \ref{semidirect} to $Q_m$.
\end{proof}

If the order of the left multiplication mappings is prime we have a complete characterization of superconnected quandles in group-theoretical terms.

\begin{proposition}\label{Prop superconnected p}
Let $Q$ be a connected quandle, $x\in Q$ and $G=\lmlt(Q)$. If $L_x$ is a $p$-element of $G$ for some prime $p$ the following are equivalent:
\begin{itemize}
\item[(i)] $Q/\lambda_Q$ is superconnected. 
\item[(ii)] $\lmlt(Q)\cong \dis(Q)\rtimes \langle L_x\rangle$ and $|\dis(Q)|$ and $|L_x|$ are coprime.

\item[(iii)] $\langle L_x\rangle$ has a normal $p$-complement.

\item[(iv)] $G=O_{p'}(G)C_G(L_x)$.
\end{itemize}

\end{proposition}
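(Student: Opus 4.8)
The plan is to run the chain of equivalences as a cycle (i)$\Rightarrow$(iii)$\Rightarrow$(ii)$\Rightarrow$(i), with (iii)$\Leftrightarrow$(iv) handled separately using the group-theoretic machinery already at hand. The key observation throughout is that, since $Q$ is connected, $Q/\lambda_Q$ is the conjugation quandle on the conjugacy class $L_x^{G}$ inside $G=\lmlt(Q)$ (this is the orbit-of-$L_x$ picture used repeatedly above), and $\dis(Q)$ is a normal subgroup of $G$ with $G=\dis(Q)\langle L_x\rangle$ by the remark that $\lmlt(Q)=\dis(Q)\langle L_x\rangle$ for idempotent $Q$. So $\langle L_x\rangle$ is, up to the intersection $\dis(Q)\cap\langle L_x\rangle$, a "near-complement'' to $\dis(Q)$, and the content is to show that when $Q/\lambda_Q$ is superconnected this intersection is trivial.

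First I would show (i)$\Rightarrow$(iii). Assume $Q/\lambda_Q$ is superconnected. By Proposition \ref{xy conjugated} the conjugacy class $L_x^{G}$ has the $\star$-property. Now I want to pass to a Sylow situation: let $P$ be a $p$-Sylow subgroup of $G$ containing $L_x$. The $\star$-property for $L_x^G$ restricts to the subgroup generated by any two conjugates, and in particular, combined with Lemma \ref{star implies N=C}, gives $C_G(L_x)=N_G(\langle L_x\rangle)$ and $L_x^{G}\cap C_G(L_x)=\{L_x\}$. Exactly as in the proof of Theorem \ref{O_p'}, the element $L_x$ acting by conjugation on its $P$-conjugates fixes only itself, forcing $L_x\in Z(P)$ and hence $\langle L_x\rangle=P$ is a $p$-Sylow subgroup of $G$. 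At this point Proposition \ref{star prop and complement} applies directly: the $\star$-property of $L_x^G$ with $\langle L_x\rangle$ Sylow gives that $\langle L_x\rangle$ has a normal $p$-complement, which is (iii).

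Next, (iii)$\Rightarrow$(ii): if $N$ is a normal $p$-complement of $\langle L_x\rangle$ in $G$, then $G=N\rtimes\langle L_x\rangle$ with $|N|$ coprime to $p=|L_x|$ (here $L_x$ is a $p$-element and Sylow, so $|L_x|$ is a $p$-power). Since $\dis(Q)\trianglelefteq G$ and $G/\dis(Q)$ is a quotient of the $p$-group $\langle L_x\rangle$, the coprime order forces $N\leq\dis(Q)$, and counting orders (using $G=\dis(Q)\langle L_x\rangle$) gives $\dis(Q)=N$ and $\dis(Q)\cap\langle L_x\rangle=1$; thus $G\cong\dis(Q)\rtimes\langle L_x\rangle$ with coprime factors. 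Then (ii)$\Rightarrow$(i) is immediate from Proposition \ref{semidirect}(ii). Finally the equivalence (iii)$\Leftrightarrow$(iv): (iii)$\Rightarrow$(iv) is Remark \ref{rem on complement}, and (iv)$\Rightarrow$(iii) follows because $L_x$ is a $p$-element that is Sylow once we are in the superconnected regime — more cleanly, I would instead route (iv) through Theorem \ref{O_p'}: (iv) is condition (ii) of that theorem, which is equivalent to the $\star$-property, hence to (i) by Proposition \ref{xy conjugated}, closing all the loops.

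The main obstacle is the step (i)$\Rightarrow$(iii): establishing that the $\star$-property together with $L_x$ being merely a $p$-\emph{element} forces $\langle L_x\rangle$ to be a full $p$-\emph{Sylow} subgroup, so that Proposition \ref{star prop and complement} can be invoked. This is precisely the $Z(P)$ argument borrowed from Theorem \ref{O_p'}, and one must be careful that it is the action of $L_x$ on $\{x^y : y\in P\}$ that is used and that the fixed-point count is taken modulo $p$; everything else is bookkeeping with the normal subgroup $\dis(Q)$ and coprimality.
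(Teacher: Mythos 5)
Your overall architecture is close in spirit to the paper's (which runs (i)$\Leftrightarrow$(iv) via Theorem \ref{O_p'} and Proposition \ref{xy conjugated}, treats (ii)$\Rightarrow$(iii) and (iii)$\Rightarrow$(iv) as immediate, and concentrates the real work in (iv)$\Rightarrow$(ii)), but two of your steps fail as written. First, in (i)$\Rightarrow$(iii) you infer from $L_x\in Z(P)$ that $\langle L_x\rangle=P$ is a full $p$-Sylow subgroup. That is a non sequitur: an element can be central in a Sylow subgroup without generating it. What $L_x\in Z(P)$ actually yields (and this is exactly how it is used inside the proof of Theorem \ref{O_p'}) is $P\leq C_G(L_x)$, hence $L_x^G\cap P\subseteq L_x^G\cap C_G(L_x)=\{L_x\}$, which feeds into the cited theorem of \cite{GGLN} to produce (iv), not (iii). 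That $\langle L_x\rangle$ is Sylow is a \emph{consequence} of the proposition (it follows from (ii)), so invoking Proposition \ref{star prop and complement} at this stage is circular; the same circularity infects your sketch of (iv)$\Rightarrow$(iii).

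Second, in (iii)$\Rightarrow$(ii) the ``counting orders'' step is not enough. From $N\leq\dis(Q)$ and $G=N\langle L_x\rangle=\dis(Q)\langle L_x\rangle$ one only gets $|\dis(Q)|=|N|\cdot|\dis(Q)\cap\langle L_x\rangle|$, which is perfectly consistent with $\dis(Q)\cap\langle L_x\rangle\neq 1$; you still must exclude a $p$-part in $\dis(Q)$. This is precisely where the paper does its real work, in (iv)$\Rightarrow$(ii): setting $\alpha=\mathcal{O}_{O_{p'}(G)}$, the factor quandle $Q/\alpha$ has a central left translation, hence collapses to a point by connectedness and idempotency, so $O_{p'}(G)$ is transitive and $L_yL_x^{-1}=hL_xh^{-1}L_x^{-1}\in O_{p'}(G)$, giving $\dis(Q)\leq O_{p'}(G)$ and the coprime semidirect decomposition. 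A repair inside your route is available: under (iii), connectedness gives $L_y=hL_xh^{-1}$ for every $y\in Q$, and writing $h=nL_x^k$ with $n\in N$ shows $L_yL_z^{-1}\in N$ by normality of $N$, whence $\dis(Q)\leq N$. But as submitted, both the Sylow claim and the order count are genuine gaps, and without one of these repairs your cycle of implications does not close.
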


\begin{proof}
According to Theorem \ref{O_p'} and Proposition \ref{xy conjugated},  (i) and (iv) are equivalent.
 
(ii) $\Rightarrow$ (iii) is clear.

(iii) $\Rightarrow$ (iv) is clear.

(iv) $\Rightarrow$ (ii) Let $\alpha=\mathcal{O}_{O_{p'}(G)}$. Then $O_{p'}(G)\leq \lmlt^\alpha$.  Since $G=O_{p'}(G)C_G(L_x)$ then $L_{[x]}$ is central in $\lmlt(Q/\alpha)$. The quandle $Q$ is connected, so $L=\setof{L_{[y]}}{y\in Q}$ is a conjugacy class in $\lmlt(Q/\alpha)$ and so $|L|=1$. The factor $Q/\alpha$ is a quandle and thus $L_{[x]}=1$. The quandle $Q/\alpha$ is connected, so $|Q/\alpha|=1$, i.e. $\alpha=1_Q$. In particular given $x,y\in Q$ we have that $y=h(x)$ for $h\in O_{p'}(G)$ and then 
$$L_yL_x^{-1}=L_{h(x)} L_x=h L_x h^{-1} L_x^{-1}\in O_{p'}(G).$$ 
So, $\dis(Q)\leq O_{p'}(G)$ and we can conclude since $G=\dis(Q)\langle L_x\rangle$.
%
\end{proof}

\begin{corollary}
Let $Q$ be a finite superconnected quandle. If $|L_x|=2^n$ then $Q$ is solvable.
\end{corollary}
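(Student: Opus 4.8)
The plan is to apply Proposition~\ref{Prop superconnected p} with $p=2$ and then combine it with the Feit--Thompson theorem and the commutator-theoretic description of solvable connected quandles from \cite{CP}.

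First I would dispose of the trivial case: if $n=0$ then $L_x=\mathrm{id}$, and since a superconnected quandle is connected this forces $\lmlt(Q)=1$ and hence $|Q|=1$, which is solvable; so assume $|Q|>1$ and $n\geq 1$. Being connected, all $L_y$ with $y\in Q$ are conjugate in $G:=\lmlt(Q)$, so $|L_y|=2^n$ for every $y$; in particular $L_x$ is a non-trivial $2$-element of $G$. The next point is to verify that $Q/\lambda_Q$ is again superconnected, which is exactly hypothesis (i) of Proposition~\ref{Prop superconnected p}. Every $2$-generated subquandle of $Q/\lambda_Q$ has the form $\langle[a],[b]\rangle=\pi(\langle a,b\rangle)$, where $\pi\colon Q\to Q/\lambda_Q$ is the canonical projection and $a,b\in Q$; the subquandle $\langle a,b\rangle$ is connected because $Q$ is superconnected, a surjective homomorphic image of a connected quandle is connected, and by the criterion of \cite{Super} already used in the proof of Proposition~\ref{xy conjugated} a quandle all of whose $2$-generated subquandles are connected is superconnected. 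Hence Proposition~\ref{Prop superconnected p} applies.

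By the equivalence (i)$\Leftrightarrow$(ii) of Proposition~\ref{Prop superconnected p} we obtain $G\cong\dis(Q)\rtimes\langle L_x\rangle$ with $\gcd(|\dis(Q)|,2^n)=1$; in particular $\dis(Q)$ is a finite group of odd order, hence solvable by the Feit--Thompson theorem, and then $G=\dis(Q)\rtimes\langle L_x\rangle$ is solvable as well, being an extension of a solvable group by the cyclic (hence solvable) group $\langle L_x\rangle$. Finally I would conclude that $Q$ itself is solvable: for a connected quandle, solvability in the sense of \cite{comm} is equivalent to solvability of $\dis(Q)$ (equivalently of $\lmlt(Q)$, since $\lmlt(Q)/\dis(Q)$ is cyclic), by the commutator theory of \cite{CP}. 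The two steps that I expect to require genuine care are the verification that $Q/\lambda_Q$ remains superconnected, so that Proposition~\ref{Prop superconnected p} is legitimately applicable, and the precise invocation of the solvability dictionary between $Q$ and $\dis(Q)$; the remainder is a direct chain of citations whose essential input is the Odd Order Theorem.
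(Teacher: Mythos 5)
Your argument is correct and follows essentially the same route as the paper: apply Proposition~\ref{Prop superconnected p} to conclude that $|\dis(Q)|$ is coprime to $2^n$ and hence odd, invoke Feit--Thompson to get solvability of $\dis(Q)$, and then pass to solvability of $Q$ via \cite[Lemma 6.2]{CP}. Your extra care in checking that $Q/\lambda_Q$ is superconnected (via the $2$-generated-subquandle criterion) and in disposing of the degenerate case $n=0$ is sound but only fills in steps the paper leaves implicit.
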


\begin{proof}
According to Proposition \ref{Prop superconnected p} $|\dis(Q)|$ is odd and so solvable. Then also $Q$ is solvable \cite[Lemma 6.2]{CP}. 
\end{proof}

%
%
%
%
%
%
%
%

\begin{theorem}
Let $Q$ be a quandle, $x\in Q$ such that $L_x$ is a $p$-element of $\lmlt(Q)$ for some prime $p$.  Then $Q$ is superconnected if and only if $Q$ is connected and superfaithful.
\end{theorem}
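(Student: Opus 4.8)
The plan is to reduce the statement to the group‑theoretic results already assembled, via the coset‑quandle machinery. The forward direction is immediate: superconnected quandles are connected by definition, and since $L_x$ is a $p$‑element of $\lmlt(Q)$, superconnectedness gives superfaithfulness — indeed, I would invoke Corollary \ref{semidirect2} in the form that matters here, or argue directly that if $Q$ were superconnected it would in particular be faithful (a superconnected quandle is superfaithful when every $L_y$ has prime‑power order, which holds here since all $L_y$ are conjugate to $L_x$, hence all $p$‑elements of the same order). So the content is the converse: assuming $Q$ connected and superfaithful, with $L_x$ a $p$‑element, show $Q$ is superconnected.

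The key reduction is that $Q$ superfaithful forces $\lambda_Q = 0_Q$ on $Q$ itself (faithfulness) and on every subquandle; combined with connectedness this means the natural map $Q \to Q/\lambda_Q$ is an isomorphism. So I would first argue that superconnectedness of $Q$ is equivalent to superconnectedness of $Q/\lambda_Q$ when $Q$ is faithful — really they coincide. Then apply Proposition \ref{Prop superconnected p}: $Q/\lambda_Q$ is superconnected iff $\lmlt(Q) \cong \dis(Q)\rtimes \langle L_x\rangle$ with $|\dis(Q)|$ coprime to $|L_x|$, iff $\langle L_x\rangle$ has a normal $p$‑complement in $\lmlt(Q)$. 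So it suffices to produce this normal $p$‑complement out of the hypothesis that $Q$ is connected and superfaithful.

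To get the normal $p$‑complement, I would work inside $\lmlt(Q)$ with $\mathcal{C} = L_x^{\lmlt(Q)} = \{L_y : y \in Q\}$, which is a conjugacy class of $p$‑elements because $Q$ is connected. Superfaithfulness of $Q$ translates, via Lemma \ref{super then C=N} (applied to the conjugation quandle $\mathcal{C}$ inside $H := \langle \mathcal{C}\rangle = \lmlt(Q)$, noting $\lmlt(Q) = \dis(Q)\langle L_x\rangle$ and $Q$ idempotent so orbits match), into the statement $\mathcal{C} \cap C_H(L_x) = \{L_x\}$, i.e. $L_x^{H}\cap C_H(L_x) = \{L_x\}$. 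Now $L_x$ is a $p$‑element, so Theorem \ref{O_p'}, implication (i)$\Rightarrow$(iii), hands us that $L_x^{H}$ has the $\star$‑property; and by Proposition \ref{xy conjugated} a conjugation quandle over a class with the $\star$‑property that equals $x^H$ is superconnected. Transporting back through the isomorphism $Q \cong Q/\lambda_Q \cong \mathcal{C} = L_x^{\lmlt(Q)}$ (this last isomorphism being the content of "connected quandles are coset quandles over $\lmlt$", here realized as a conjugation quandle since the orbit map is faithful), we conclude $Q$ is superconnected.

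The main obstacle I anticipate is bookkeeping the identification $Q/\lambda_Q \cong \mathcal{C}$ cleanly — making sure that the subquandle structure is preserved so that "superconnected" really transports, and verifying that superfaithfulness of $Q$ is exactly the hypothesis $\mathcal{C}\cap C_H(L_x)=\{L_x\}$ needed to trigger Theorem \ref{O_p'}(i). One subtlety: Theorem \ref{O_p'} is stated for $p$‑elements (prime‑power order), which is precisely the hypothesis on $L_x$, so no extra coprimality input is needed — the $\star$‑property is extracted directly from the "intersection with centralizer is trivial" condition. Everything else is assembling Lemmas \ref{super then C=N}, \ref{xy conjugated}, Theorem \ref{O_p'} and Proposition \ref{Prop superconnected p} in order.
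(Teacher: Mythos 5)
Your proposal is correct and follows essentially the same route as the paper: identify the connected superfaithful $Q$ with the conjugation quandle $L_x^{\lmlt(Q)}$, use Lemma \ref{super then C=N} to get $L_x^{\lmlt(Q)}\cap C_{\lmlt(Q)}(L_x)=\{L_x\}$, invoke Theorem \ref{O_p'} for the $\star$-property, and conclude via Proposition \ref{xy conjugated}. The forward direction is, as you note, just the general fact that idempotent superconnected left quasigroups are superfaithful (no prime-power hypothesis needed there), plus connectedness by definition.
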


\begin{proof}
The quandle $Q$ is connected and superfaithful. Therefore it is isomorphic to $L_x^{\lmlt(Q)}$ and moreover $L_x^{\lmlt(Q)}\cap N_{\lmlt(Q)}(L_x)=\{L_x\}$ according to Lemma \ref{super then C=N}.  According to Theorem \ref{O_p'} then $L_x^{\lmlt(Q)}$ has the $\star$-property and so it is superconnected by Proposition \ref{xy conjugated}.
\end{proof}
%
%
%

\section{Simple Quandles}\label{simple}

\subsection{Superfaithful and superconnected simple quandles}

Simple quandles are given by quandles $(L,t,\theta)$ where $L$ is a finite simple group, see \cite{AG,JS}.

In this section we are focusing on simple superfaithful and superconnected quandles. The main tool we are going to use is the following characterization.

\begin{proposition}\label{prop on simple}\cite[Proposition 2.7]{Super}
Let $G$ be a finite group, $\theta\in \aut{G}$ and $t> 1$. The following are equivalent:
\begin{itemize}
\item[(i)] $(G,1,\theta)$ is superfaithful and $t$ is coprime with $|Fix(\theta)|$.
\item[(ii)] $(G,t,\theta)$ is superfaithful.
\end{itemize}
In particular, $(G,t,1)$ is superfaithful if and only if $|G|$ and $t$ are coprime.
\end{proposition}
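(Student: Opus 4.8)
The plan is to work with the coset-quandle description $(G,t,\theta) = \mathcal{Q}(G^t, H_t, \theta_t)$ where $H_t = \{(x,\dots,x) : x\in Fix(\theta)\}$ and $\theta_t$ is the twisted shift on $G^t$. I would first recall the general criterion (from \cite{Super}, as used implicitly in the excerpt) that a coset quandle $\mathcal{Q}(G,H,\theta)$ is superfaithful precisely when, for every subquandle, the stabilizer/Cayley-kernel condition $Fix(L_x) = \{x\}$ holds; concretely this translates to a statement about $Fix(\theta_t)$ and its behaviour under the relevant conjugates. The key structural observation is that $\theta_t^t = \widehat{?}$ acts coordinatewise as $\theta$ (up to a cyclic relabelling), so $Fix(\theta_t)$ only sees the ``diagonal'' part, while the orbit structure of $\theta_t$ on $G^t$ is governed by how $t$ interacts with the order of $\theta$. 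This is where the coprimality of $t$ with $|Fix(\theta)|$ must enter.

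The main steps I would carry out, in order, are: (1) unwind Lemma~\ref{quandles as conj} style computations to express $L_{(x,\dots,x)}$ on $(G,t,\theta)$ and identify when a point is fixed; (2) show the implication (ii) $\Rightarrow$ (i) by restricting a witness of non-superfaithfulness in $(G,1,\theta)$ to the ``constant'' subquandle of $(G,t,\theta)$, and separately extract the coprimality of $t$ with $|Fix(\theta)|$ by exhibiting, when $\gcd(t,|Fix(\theta)|) = d > 1$, an explicit nontrivial element of a Cayley kernel inside a suitable cyclic subquandle generated by a single element of order $d$ in $Fix(\theta)$; (3) for (i) $\Rightarrow$ (ii), take an arbitrary subquandle $S$ of $(G,t,\theta)$, analyze its generating set inside $G^t$, and use the coprimality together with superfaithfulness of $(G,1,\theta)$ on the corresponding ``projected'' subquandle of $\mathcal{Q}(G,Fix(\theta),\theta)$ to conclude $\lambda_S = 0_S$. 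The special case $\theta = 1$ then follows because $(G,1,1) = Conj(\{1\})$ is trivially superfaithful (it is a one-element quandle), so the content collapses to exactly the coprimality of $|G|$ and $t$.

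For step (3) the technical heart is a lifting/descent argument: given $u, v$ in the subquandle $S \subseteq G^t$ with $L_u = L_v$ on $S$, I want to deduce $u = v$. Writing $u = (u_1,\dots,u_t)$, the action of $L_u$ shifts and twists coordinates, so $L_u = L_v$ forces a chain of equalities relating the $u_i$ and $v_i$; iterating $t$ times brings in $\theta$ and the condition $L_u^t$-related equalities, at which point the hypothesis that $(G,1,\theta)$ is superfaithful (applied to the subquandle of $\mathcal{Q}(G,Fix(\theta),\theta)$ generated by the coordinates) pins down the diagonal data, while $\gcd(t,|Fix(\theta)|) = 1$ lets me invert the ``multiplication by $t$'' that appears when collapsing the cyclic part. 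I expect this coprimality-driven inversion — making precise exactly which group-order quantity is being inverted and why $|Fix(\theta)|$ (rather than $|G|$ or $|\theta|$) is the right modulus — to be the main obstacle, and I would isolate it as a small lemma about when the endomorphism $y \mapsto y^t$ (equivalently a cyclic shift raised to a power) is bijective on $Fix(\theta)$.

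Finally, I would double-check the ``in particular'' clause directly rather than as a corollary of the general equivalence: when $\theta = 1$, $\theta_t$ is the plain cyclic shift on $G^t$, $H_t$ is the full diagonal $\cong G$, and $(G,t,1)$ is superfaithful iff the shift quandle has trivial Cayley kernel on all subquandles; a clean computation shows $L_x = L_y$ on a subquandle can be violated exactly when some element has order sharing a common factor with $t$, i.e. iff $\gcd(|G|,t) = 1$ fails — this both confirms the statement and serves as the base case guiding the general proof.
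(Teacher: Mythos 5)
The paper does not actually prove this statement: it is imported verbatim as \cite[Proposition 2.7]{Super}, so I am measuring your plan against the natural argument rather than an in-paper proof. Your overall framework is the right one. Working at the basepoint $H_t$ (which suffices, since $G^t$ acts transitively by automorphisms and $Fix(L_{aH_t})=a\cdot Fix(L_{H_t})$) and using the criterion that superfaithfulness means $Fix(L_x)=\{x\}$ everywhere, the computation you propose in step (1) gives: $bH_t\in Fix(L_{H_t})$ iff $b^{-1}\theta_t(b)\in H_t$, which unwinds to $b_i=b_1c^{-(i-1)}$ and $b_1^{-1}\theta(b_1)=c^{t}$ for some $c\in Fix(\theta)$, while $bH_t=H_t$ iff $c=1$ and $b_1\in Fix(\theta)$. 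From this, your extraction of the coprimality in step (2) (an element $c\neq 1$ of $Fix(\theta)$ with $c^t=1$, paired with $b_1=1$, is a non-trivial fixed point) and your identification in step (3) of the bijectivity of $y\mapsto y^t$ on $Fix(\theta)$ as the decisive lemma are both correct; (i)$\Rightarrow$(ii) then follows because $c^t\in Fix(\theta)$ forces $b_1\in Fix(\theta)$ by superfaithfulness of $(G,1,\theta)$, hence $c^t=1$ and $c=1$ by coprimality.

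The genuine gap is in the first half of your step (2). There is no ``constant subquandle'' of $(G,t,\theta)$ isomorphic to $(G,1,\theta)$: the diagonal orbit $\setof{(g,\ldots,g)H_t}{g\in G}$ is not closed under the operation, since $(a,\ldots,a)H_t*(b,\ldots,b)H_t=(a\theta(a^{-1}b),b,\ldots,b)H_t$ lies in that set only when $a^{-1}b\in Fix(\theta)$, and no twisted diagonal embedding $g\mapsto(\theta^{a_1}(g),\ldots,\theta^{a_t}(g))$ intertwines $\theta$ with $\theta_t$ unless $|\theta|$ divides $t-1$. So you cannot transport a non-superfaithfulness witness of $(G,1,\theta)$ into $(G,t,\theta)$ by restriction, and the two halves of (ii)$\Rightarrow$(i) are not independent as you present them. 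The repair is to reorder: first derive $\gcd(t,|Fix(\theta)|)=1$ from (ii) exactly as you do, and only then handle $(G,1,\theta)$ --- given $g^{-1}\theta(g)=d\in Fix(\theta)$, coprimality makes $c\mapsto c^t$ surjective on $\langle d\rangle\leq Fix(\theta)$, so $d=c^t$ for some $c\in Fix(\theta)$, and condition (ii) applied to this data yields $g\in Fix(\theta)$. With that reordering your outline closes up; the ``in particular'' clause is handled correctly, since $(G,1,1)$ is a one-point quandle and $Fix(1)=G$.
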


Let us denote by $A_n$ the alternating group of rank $n$ and by $PSL_{n+1}(q)$ the projective special linear groups.

\begin{proposition}\label{sporadic inner}
Let $L=A_n, PSL_{n+1}(q)$ or $L$ be a sporadic group and $g\in L$. The following are equivalent:
\begin{itemize}
\item[(i)] $(L,t,\widehat{g})$ is superfaithful.
\item[(ii)] $g=1$ and $t$ is coprime with $|L|$. 
\end{itemize}
\end{proposition}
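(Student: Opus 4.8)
The plan is to reduce the statement to the group-theoretic results already collected in Section \ref{Sconj}, most notably Proposition \ref{xy conjugated} together with Lemma \ref{star implies N=C}, and to invoke Proposition \ref{prop on simple} to handle the passage from $(L,1,\widehat g)$ to $(L,t,\widehat g)$ for general $t$. The implication (ii) $\Rightarrow$ (i) is the easy direction: if $g=1$ then $\widehat g$ is the identity automorphism, $(L,t,1)$ is superfaithful precisely when $|L|$ and $t$ are coprime by the last sentence of Proposition \ref{prop on simple}, so nothing is left to prove there.

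For (i) $\Rightarrow$ (ii), I would first show that superfaithfulness of $(L,t,\widehat g)$ forces superfaithfulness of $(L,1,\widehat g)$ and that $t$ is coprime with $|Fix(\widehat g)| = |C_L(g)|$; this is exactly the content of Proposition \ref{prop on simple}(ii) $\Rightarrow$ (i), so the real work is to prove that $(L,1,\widehat g)$ superfaithful implies $g = 1$. By Example \ref{conjugacy class}, $(L,1,\widehat g) = \mathcal Q(L,C_L(g),\widehat g)$ is isomorphic to the conjugation quandle $g^L$. So the claim becomes: for $L$ alternating, projective special linear, or sporadic, if $g^L$ is superfaithful then $g=1$. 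Using Lemma \ref{super then C=N}, superfaithfulness of $g^L$ gives $C_L(g) = N_L(\langle g\rangle)$ and $g^L \cap C_L(g) = \{g\}$ for the relevant elements. The strategy is then to argue that a nontrivial element $g$ of such a simple group always fails one of these two conditions: either it commutes with a conjugate of itself that is distinct from it (giving $|g^L \cap C_L(g)| > 1$), or $\langle g\rangle$ admits an automorphism realized by conjugation in $L$ that is not inner on $\langle g\rangle$ (giving $N_L(\langle g\rangle) \neq C_L(g)$).

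The concrete mechanism I expect to use is a power map / prime divisor argument: if $p \mid |g|$ and $p$ is odd, then $g$ and a suitable power $g^k$ (with $g^k \neq g$) are conjugate in $L$ for these families — this is a known property of $A_n$, $PSL_{n+1}(q)$ and the sporadic groups, that rational classes or at least conjugacy of $g$ with various $g^k$ is abundant — so $g^k \in N_L(\langle g\rangle)$ witnesses $N_L(\langle g\rangle) \ne C_L(g)$; and if $g$ is a $2$-element, one exhibits a conjugate of $g$ commuting with $g$, for instance inside a dihedral or elementary abelian $2$-subgroup, contradicting $g^L \cap C_L(g) = \{g\}$. Alternatively, and perhaps more uniformly, one can invoke Theorem \ref{O_p'}: if $g$ is a $p$-element then $g^L$ superfaithful would be close to forcing the $\star$-property, hence $L = O_{p'}(L) C_L(g)$, which for a nonabelian simple $L$ with $O_{p'}(L) = 1$ forces $C_L(g) = L$, i.e. $g \in Z(L) = 1$; the general case then follows by decomposing $g$ into its $p$-parts and using that superfaithfulness is inherited by the subquandles generated by these parts.

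The main obstacle will be the case analysis establishing, for each of the three families, that a nontrivial $g$ genuinely violates one of the Lemma \ref{super then C=N} conditions — in particular handling elements of even order (where the clean $\star$-property/$O_{p'}$ argument does not directly apply) and making sure the argument that $g$ is conjugate to a proper power $g^k$ is valid uniformly across $A_n$, the groups $PSL_{n+1}(q)$, and the sporadic list. For $A_n$ this is classical (conjugacy in $S_n$ depends only on cycle type, and one controls when it descends to $A_n$); for $PSL_{n+1}(q)$ it rests on the structure of semisimple and unipotent classes and on field/graph automorphisms being outer; for the sporadic groups it is ultimately a finite check against the ATLAS. I would organize the proof so that Theorem \ref{O_p'} does the heavy lifting for odd prime-power elements, leaving only the $2$-element and mixed-order reductions to be argued by hand.
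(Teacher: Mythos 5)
Your reduction to $(L,1,\widehat g)\cong Conj(g^L)$ via Proposition \ref{prop on simple} and Example \ref{conjugacy class} matches the paper, and your ``alternative'' route for elements of prime power order is correct and clean: superfaithfulness gives $g^L\cap C_L(g)=Fix(L_g)=\{g\}$, Theorem \ref{O_p'} then yields $L=O_{p'}(L)C_L(g)$, and since $p$ divides $|L|$ and $L$ is simple we get $O_{p'}(L)=1$, hence $g\in Z(L)=1$. That argument needs none of the hypotheses on the family of $L$ and works for every finite nonabelian simple group. The genuine gap is the passage from prime power order to arbitrary order. Your proposed reduction --- decomposing $g$ into its $p$-parts and ``using that superfaithfulness is inherited by the subquandles generated by these parts'' --- does not work: the $p$-part $g_p$ of $g$ is not an element of the conjugacy class $g^L$, so it generates no subquandle of $Conj(g^L)$, and there is no implication from $g^L\cap C_L(g)=\{g\}$ to $g_p^L\cap C_L(g_p)=\{g_p\}$. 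Nor is there a $\pi$-element analogue of Theorem \ref{O_p'} available here: Theorem \ref{pi'2} only passes from condition (i) to the $\star$-property, and superfaithfulness does not give you condition (i) back for composite order.

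Your first mechanism (exhibit either a distinct commuting conjugate, or a conjugate proper power $g^k\in\langle g\rangle\setminus\{g\}$, which then lies in $g^L\cap C_L(g)$) is sound in principle, but the assertion that every nontrivial element of odd order in these families is conjugate to a proper power of itself is exactly the kind of rationality statement that fails for some classes (and gives nothing at all for involutions, as you note), so the promised case analysis over $A_n$, $PSL_{n+1}(q)$ and the sporadic groups is the entire content of the composite-order case and is not carried out. The paper sidesteps all of this by citing the main result of \cite{ConjugacySimple}: every nontrivial conjugacy class of these simple groups contains two distinct commuting elements, which immediately contradicts $Fix(L_g)=\{g\}$ in one line, uniformly in the order of $g$. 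To complete your proof you would need either that external result or a worked-out argument covering elements of composite order; as written, only the prime-power case is actually proved.
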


\begin{proof}

(i) $\Rightarrow$ (ii) Let $\theta=\widehat{g}$ be an inner automorphism of $L$. According to Lemma \ref{prop on simple} the quandle $Q^\prime=(L,1,\widehat{g})$ is also superfaithful and $t$ is coprime with $|C_L(g)|$.  By Example \ref{conjugacy class}, the quandle $Q^\prime$ is isomorphic to the conjugation quandle $\mathcal C=Conj(g^L)$.  If $g\neq 1,$ according to the main result in \cite{ConjugacySimple} the quandle $\mathcal{C}$ contains commuting elements, i.e. it is not superfaithful.  Thus, $g=1$ and so $t$ is coprime with $|C_G(1)|=|L|$.

(ii) $\Rightarrow$ (i) It follows by Proposition \ref{prop on simple}.
\end{proof}

A group $G$ is said to be {\it complete} if the map 
$$G\longrightarrow \aut{G}, \quad g\mapsto \widehat{g}$$
is an isomorphism, i.e. $Z(G)=1$ and every automorphism of $G$ is inner. Some finite simple groups are complete as the sporadic groups in the following list:
\begin{equation*}
\mathfrak C=\{M_{11},\, M_{23}, \,M_{24}, \,J_1,\, J_4,\, Co_1,\, Co_2,\, Co_3,\, Fi_{23}, \, He, \, Ly,\, Th, \, B,\, M\}.
\end{equation*} 

\begin{corollary}
Let $L\in \mathfrak C$ and $\theta\in \aut{L}$. The following are equivalent:
\begin{itemize}
\item[(i)] $(L,t,\theta)$ is superfaithful.
\item[(ii)] $\theta=1$ and $t$ is coprime with $|L|$. 
\end{itemize}
\end{corollary}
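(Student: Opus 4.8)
The plan is to deduce this directly from Proposition \ref{sporadic inner} together with the completeness hypothesis. First I would observe that every group $L$ in the list $\mathfrak{C}$ is a sporadic simple group, so Proposition \ref{sporadic inner} applies verbatim to $L$: for any $g\in L$, the quandle $(L,t,\widehat{g})$ is superfaithful if and only if $g=1$ and $t$ is coprime with $|L|$.

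Next I would use the hypothesis $L\in\mathfrak C$, i.e.\ that $L$ is complete, to reduce an arbitrary $\theta\in\aut{L}$ to an inner automorphism. Since the map $g\mapsto\widehat g$ is an isomorphism $L\to\aut{L}$, there is a (unique) $g\in L$ with $\theta=\widehat g$, and moreover $\theta=1$ if and only if $g\in Z(L)=1$, that is, if and only if $g=1$. Thus the statement $(L,t,\theta)$ is superfaithful becomes the statement $(L,t,\widehat g)$ is superfaithful.

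Combining the two observations: $(L,t,\theta)$ is superfaithful $\iff$ $(L,t,\widehat g)$ is superfaithful $\iff$ $g=1$ and $t$ is coprime with $|L|$ $\iff$ $\theta=1$ and $t$ is coprime with $|L|$. This gives both implications at once.

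I do not expect any real obstacle here: the corollary is essentially a translation of Proposition \ref{sporadic inner} through the bijection between $L$ and $\aut L$ afforded by completeness. The only point requiring a word of justification is that the sporadic groups in $\mathfrak C$ indeed fall under the hypotheses of Proposition \ref{sporadic inner}, and that completeness is exactly what forces $\aut L$ to consist of inner automorphisms indexed faithfully by $L$ so that "$\theta=1$" and "$g=1$" coincide.
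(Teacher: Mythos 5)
Your proposal is correct and follows exactly the paper's own route: since every $L\in\mathfrak C$ is complete, each $\theta\in\aut{L}$ is $\widehat{g}$ for a unique $g\in L$ with $\theta=1$ iff $g=1$, and Proposition \ref{sporadic inner} then gives the equivalence. The paper's proof is a one-line version of the same argument.
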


\begin{proof}
For such groups, all the automorphisms are inner and so we apply Proposition \ref{sporadic inner}.
\end{proof}

Let us now consider simple quandles over alternating groups.

\begin{proposition}
Let $Q=(A_n,t,\theta)$ and $n>5$.
The following are equivalent:
\begin{itemize}
\item[(i)] $Q$ is superfaithful.
\item[(ii)] $\theta=1$ and $t$ is coprime with $n!$.
\end{itemize}

\end{proposition}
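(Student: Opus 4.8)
The plan is to prove both implications by reduction to conjugation quandles. The implication (ii) $\Rightarrow$ (i) is immediate: if $\theta=1$ then $Q=(A_n,t,1)$, which by the last assertion of Proposition \ref{prop on simple} is superfaithful exactly when $|A_n|$ and $t$ are coprime; since $|A_n|=n!/2$ has the same prime divisors as $n!$, coprimality with $n!$ suffices.

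For (i) $\Rightarrow$ (ii) the first step is to reduce to the statement: \emph{if $\mathcal{Q}(A_n,Fix(\theta),\theta)=(A_n,1,\theta)$ is superfaithful then $\theta=1$}. Indeed, if this holds then for $t=1$ we are done, and for $t>1$ Proposition \ref{prop on simple} additionally gives that $(A_n,1,\theta)$ is superfaithful and that $t$ is coprime with $|Fix(\theta)|$, which once $\theta=1$ reads as $t$ coprime with $|A_n|$, hence with $n!$. So assume $\mathcal{Q}(A_n,Fix(\theta),\theta)$ is superfaithful; recall that superfaithfulness of a conjugation quandle $Conj(\mathcal{C})$, with $\mathcal{C}\subseteq G$, means that $\mathcal{C}$ has no two distinct commuting elements (by \cite[Lemma 2.3]{Super}, since $L_x=\widehat{x}|_{\mathcal{C}}$). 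When $n\ne 6$ we have $\aut{A_n}=S_n$ acting by conjugation, so $\theta=\widehat{g}|_{A_n}$ for some $g\in S_n$ and $Fix(\theta)=C_{A_n}(g)$, and I split on the parity of $g$.

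If $g\in A_n$ then $\theta$ is inner and, by Example \ref{conjugacy class}, $\mathcal{Q}(A_n,C_{A_n}(g),\widehat{g})$ is isomorphic to $Conj(g^{A_n})$; since $A_n$ is simple, the main result of \cite{ConjugacySimple} yields two distinct commuting elements of $g^{A_n}$ unless $g=1$, so superfaithfulness forces $\theta=1$ (this is exactly the argument of Proposition \ref{sporadic inner}). If $g\in S_n\setminus A_n$ (so $g\ne 1$), the key point is that $aC_{A_n}(g)\mapsto aga^{-1}$ defines a quandle isomorphism $\mathcal{Q}(A_n,C_{A_n}(g),\widehat{g}|_{A_n})\longrightarrow Conj(g^{S_n})$: well-definedness and compatibility with the coset-quandle operation $aH*bH=a\,\widehat{g}(a^{-1}b)\,H$ are short computations, injectivity uses $C_{S_n}(g)\cap A_n=C_{A_n}(g)$, and surjectivity uses that an odd permutation $s$ can be written $s=(sg^{-1})g$ with $sg^{-1}\in A_n$; both sides have the same cardinality because $g$ odd forces $C_{S_n}(g)\not\le A_n$. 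Hence it suffices to show $Conj(g^{S_n})$ is not superfaithful for any odd $g\ne 1$, that is, to exhibit $g'\in g^{S_n}\setminus\{g\}$ commuting with $g$. If $g^2\ne 1$, take $g'=g^{-1}$: it has the same cycle type as $g$, it commutes with $g$, and $g'\ne g$. If $g^2=1$, then $g$ is a product of an odd number $\ge 1$ of disjoint transpositions: if $g=(ab)$ take $g'=(cd)$ for two further points (available since $n>5$), and if $g=(ab)(cd)t_3\cdots t_{2k+1}$ with $2k+1\ge 3$ take $g'=(ac)(bd)t_3\cdots t_{2k+1}$, which has the same cycle type as $g$, commutes with $g$ (the Klein four-group generated by $(ab)(cd)$ and $(ac)(bd)$ is abelian), and differs from $g$. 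This contradiction rules out every outer $\theta$, so for $n\ne 6$ we obtain $\theta=1$.

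Finally, for $n=6$ we have $\mathrm{Out}(A_6)\cong C_2\times C_2$, so besides the automorphisms induced by $S_6$ (handled exactly as above, through the isomorphism with $Conj(g^{A_6})$ or $Conj(g^{S_6})$) there are the exceptional outer automorphisms. These are treated by the same strategy: by Lemma \ref{quandles as conj}, $\mathcal{Q}(A_6,Fix(\theta),\theta)\cong Conj\big((1,\theta)^{A_6\rtimes\langle\theta\rangle}\big)$, and for each such $\theta\ne 1$ one exhibits two distinct commuting elements of that conjugacy class; as there are only finitely many such $\theta$, this can alternatively be checked computationally. I expect this $n=6$ case — and, more generally, arranging the construction of a distinct commuting conjugate uniformly over all cycle types of $g$ — to be the main technical point; everything else is coset-quandle bookkeeping together with the already available Proposition \ref{prop on simple}, Example \ref{conjugacy class} and \cite{ConjugacySimple}.
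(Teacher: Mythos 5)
Your proof is correct and follows essentially the same route as the paper: reduce to $t=1$ via Proposition \ref{prop on simple}, dispose of inner automorphisms through Example \ref{conjugacy class} and \cite{ConjugacySimple}, and rule out outer automorphisms by exhibiting two distinct commuting elements in the $S_n$-conjugacy class of the odd permutation inducing $\theta$ (the paper reaches this same combinatorial fact via Lemma \ref{quandles as conj} and a commutator computation inside $A_n\rtimes\langle\theta\rangle$, whereas you pass through the isomorphism with $Conj(g^{S_n})$). The only substantive difference is that your cycle-type case analysis actually proves the fact the paper merely asserts --- every non-trivial conjugacy class of $S_n$ for $n\ge 4$ contains two distinct commuting elements --- and both treatments fall back on a finite (or computational) check for the exceptional automorphisms of $A_6$.
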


\begin{proof}

(i) $\Rightarrow$ (ii) If $Q$ is superfaithful, then also $(A_n,1,\theta)$ is superfaithful by Proposition \ref{prop on simple} and $t$ is coprime with $|Fix(\theta)|$. According to Lemma \ref{quandles as conj}, we have, $$(A_n,1,\theta)=\mathcal{Q}(A_n,Fix(\theta),\theta)\cong Conj(\setof{(x\theta(x)^{-1},\theta)}{x\in A_n})\subseteq Conj(A_n\rtimes \langle \theta\rangle).$$

If $\theta$ is an inner automorphism we can conclude by Proposition \ref{sporadic inner}. Let assume that $\theta$ is not inner, we have
\begin{align*}
[(x\theta(x)^{-1},\theta),(1,\theta)]&=(1,\theta)(x\theta(x)^{-1},\theta)(1,\theta)^{-1}(x\theta(x)^{-1},\theta)^{-1}\\
&=(\theta(x)\theta^2(x)^{-1},\theta)(x\theta^{-1}(x)^{-1},\theta^{-1})\\
&=(\theta(x)\theta^2(x)^{-1}\theta(x)x^{-1},1)
\end{align*} 
for every $x\in A_n$. If $n\neq 6$, we can assume that $\theta$ is the inner automorphism with respect to an element of $\sym{n}$ restricted to $A_n$, say $\tau$. Thus we have
\begin{align*}
\theta(x)\theta^2(x)^{-1}\theta(x)x^{-1}=\tau x\tau^{-1}\tau^2 x^{-1} \tau^{-2}\tau x \tau^{-1}x^{-1}=\tau x\tau x^{-1} \tau^{-1} x \tau^{-1}x^{-1}=[\tau,x\tau x^{-1}].
\end{align*}
Since $\sym{n}=\langle A_n,\tau\rangle$ we have that $\tau^{\sym{n}}=\tau^{A_n}$. Every non-trivial conjugacy class of $\sym{n}$ contains commuting elements, so we can choose $x\in A_n$, such that $[\tau, x \tau x^{-1}]=1$ and $x\tau x^{-1}\neq \tau$, i.e. $x\notin Fix(\theta)$. Thus, $\tau=1$ and accordingly $t$ is coprime with $|A_n|=\frac{n!}{2}$ and so it is coprime with $n!$ (since $n>5$). If $n=6$ a GAP computer search reveals that $(A_6,1,\theta)$ is not superfaithful whenever $\theta\neq 1$.

(ii) $\Rightarrow$ (i) It follows by Proposition \ref{sporadic inner}.
\end{proof}


Let $L$ be a finite simple group, $\theta\in \aut{L}$ and $t\in \mathbb{N}$.  The simple quandle $Q=(L,t,\theta)$ can be identified with the conjugacy class $\mathcal{C}$ of $(1,\ldots,1,\theta)\in L^t\rtimes \langle \theta\rangle$ (see Example \ref{conjugacy class}).  Then $Q$ is superconnected if and only if every pair of elements $x,y\in \mathcal{C}$ is conjugated in the sugroup $\langle x,y\rangle \leq L^t\rtimes \langle \theta\rangle$.
\begin{corollary}\label{(L,t,1) super}
Let $Q=(L,t,1)$. The following are equivalent:
\begin{itemize}
\item[(i)] $Q$ is superfaithful.
\item[(ii)] $Q$ is superconnected.
\item[(iii)] $|L|$ and $t$ are coprime.
\end{itemize}
\end{corollary}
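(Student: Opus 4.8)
The plan is to obtain the equivalence (i)$\Leftrightarrow$(iii) for free from the ``in particular'' clause of Proposition \ref{prop on simple} and then to close the cycle (iii)$\Rightarrow$(ii)$\Rightarrow$(i). For $t=1$ the quandle $(L,1,1)=\mathcal{Q}(L,Fix(1),1)$ has a single element and all three conditions hold trivially, so one may assume $t\geq 2$. All the real content is then in the superconnectedness statement (ii), and I would attack it through the conjugation-quandle dictionary.

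First I would pass to that dictionary. Applying Lemma \ref{quandles as conj} to the group $L^t$ and the automorphism $1_t$, we get $(L,t,1)=\mathcal{Q}(L^t,Fix(1_t),1_t)\cong Conj(\mathcal{C})$, where $\mathcal{C}=(1,1_t)^{G}$ and $G=L^t\rtimes\langle 1_t\rangle$. Write $\sigma=1_t$; as a permutation of the coordinates of $L^t$ it is a $t$-cycle, so $\sigma$ has order exactly $t$. Let $\pi$ be the set of primes dividing $t$, so that $\sigma$ is a $\pi$-element of $G$. By Proposition \ref{xy conjugated}, superconnectedness of $(L,t,1)$ is equivalent to $\mathcal{C}$ being a single conjugacy class $\sigma^{H}$ in $H=\langle\mathcal{C}\rangle$ together with $\sigma^{H}$ satisfying the $\star$-property \eqref{superconnected conj}.

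For (iii)$\Rightarrow$(ii): assume $\gcd(|L|,t)=1$. Then $L^t\trianglelefteq G$, $G=L^t\langle\sigma\rangle$, and $|L^t|=|L|^{t}$ is a $\pi'$-number, so $L^t$ is a normal $\pi$-complement of $\langle\sigma\rangle$ in $G$. By Remark \ref{rem on complement} this gives $G=O_{\pi'}(G)C_G(\sigma)$, and then the last assertion of Theorem \ref{pi'2} shows that $\sigma^{G}$ has the $\star$-property. For each $y\in\sigma^{G}$ the element $h$ with $y=h\sigma h^{-1}$ furnished by \eqref{superconnected conj} lies in $\langle\sigma,y\rangle\leq H$, so $\mathcal{C}=\sigma^{G}=\sigma^{H}$ and $\sigma^{H}$ still satisfies \eqref{superconnected conj}; by Proposition \ref{xy conjugated}, $(L,t,1)$ is superconnected. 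For (ii)$\Rightarrow$(i): if $(L,t,1)\cong Conj(\mathcal{C})$ is superconnected, then by Proposition \ref{xy conjugated} $\mathcal{C}=\sigma^{H}$ has the $\star$-property, so Lemma \ref{star implies N=C} applied inside $H$ gives $\sigma^{H}\cap C_H(y)=\{y\}$, that is $Fix(L_y)=\{y\}$, for every $y$; hence $(L,t,1)$ is superfaithful by \cite[Lemma 2.3]{Super}. Together with (i)$\Leftrightarrow$(iii) from Proposition \ref{prop on simple}, this gives the full equivalence.

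The delicate point is the shuttling between the ambient semidirect product $G=L^t\rtimes\langle\sigma\rangle$, where the hypotheses of Theorem \ref{pi'2} are visibly met, and the possibly smaller subgroup $H=\langle\mathcal{C}\rangle$ over which the quandle is genuinely a conjugation quandle. What makes this harmless is that the $\star$-property refers only to the $2$-generated subgroups $\langle\sigma,y\rangle$, which are automatically contained in $H$, so both the connectedness statement ``$\mathcal{C}=\sigma^{H}$'' and the $\star$-property descend from $G$ to $H$ with no extra work; the only other thing to be careful about is recording that $\sigma$ really has order $t$ (so that $\pi$ is the right prime set) and that $L^t=O_{\pi'}(G)$ plays the role of the normal $\pi$-complement.
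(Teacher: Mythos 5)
Your proof is correct and follows essentially the same route as the paper: the paper disposes of (iii)$\Rightarrow$(ii) in one line by noting that $|\dis(Q)|=|L|^t$ and $|L_x|=t$ are coprime and invoking Corollary \ref{semidirect2}, whose own proof (via Proposition \ref{semidirect}) is exactly the chain Remark \ref{rem on complement} $\to$ Theorem \ref{pi'2} $\to$ Proposition \ref{xy conjugated} that you unfold by hand inside $L^t\rtimes\langle 1_t\rangle$. The only cosmetic difference is that you work in the ambient semidirect product via Lemma \ref{quandles as conj} rather than in $\lmlt(Q)=\dis(Q)\langle L_x\rangle$ (which sidesteps the identification $\dis(Q)\cong L^t$); the remaining implications, (ii)$\Rightarrow$(i) and (i)$\Leftrightarrow$(iii), are obtained the same way in both arguments.
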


\begin{proof}
We just need to prove that (iii) $\Rightarrow$ (ii). The displacement group is isomorphic to $L^t$ and the order of $L_x$ is $t$. Hence we can apply Corollary \ref{semidirect2}.
\end{proof}

In particular, Corollary \ref{(L,t,1) super} shows that Conjecture \ref{conj1} is not true.

Combining the results of the previous section and Corollary \ref{(L,t,1) super} we can make a (non-exhaustive) list of superconnected simple groups. 

\begin{theorem}
Let $L$ be a simple group then:
\begin{itemize}
\item[(i)] if $n>5$ and $L=A_n$ then $(L,t,\theta)$ is superconnected if and only if $\theta=1$ and $t$ is coprime with $n!$.
\item[(ii)] If $L=A_n(q)$ or $L$ is a sporadic group, and $g\in L$ then $(L,t,\widehat{g})$ is superconnected if and only if $g=1$ and $t$ is coprime with $|L|$.
\item[(iii)] If $L$ is a sporadic group in $\mathfrak{C}$ then $(L,t,\theta)$ is superconnected if and only if $\theta=1$ and $t$ is coprime with $|L|$.
\end{itemize}\end{theorem}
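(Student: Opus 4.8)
The plan is to deduce this classification from the superfaithfulness results established earlier in this section, using Corollary~\ref{(L,t,1) super} for the converse direction; the bridge between the two notions is the elementary fact that a superconnected quandle is superfaithful.

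First I would record that fact. A superconnected quandle is trivially connected, being a subalgebra of itself. For superfaithfulness, let $Q$ be a quandle and suppose $x,y\in Q$ satisfy $L_x=L_y$ with $x\neq y$. Idempotency gives $x*y=L_x(y)=L_y(y)=y$ and $y*x=L_y(x)=L_x(x)=x$, and since $L_x=L_y$ fixes both $x$ and $y$ we also get $x\ldiv y=L_x^{-1}(y)=y$ and $y\ldiv x=L_x^{-1}(x)=x$; together with $x*x=x$ and $y*y=y$ this shows that $\{x,y\}$ is a two-element subalgebra on which $\lmlt(\{x,y\})$ acts as the trivial group, hence is disconnected. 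Applying the contrapositive to every subalgebra shows that a superconnected quandle is superfaithful.

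Next come the forward implications. If $(A_n,t,\theta)$ with $n>5$ is superconnected, then it is superfaithful, so the classification of superfaithful quandles $(A_n,t,\theta)$ obtained above forces $\theta=1$ and $t$ coprime with $n!$; this is (i). If $(L,t,\widehat g)$ with $L=A_n(q)=PSL_{n+1}(q)$ or $L$ sporadic is superconnected, then it is superfaithful, so Proposition~\ref{sporadic inner} forces $g=1$ and $t$ coprime with $|L|$; this is (ii). If $(L,t,\theta)$ with $L\in\mathfrak C$ is superconnected, then it is superfaithful, so the corollary immediately following the list $\mathfrak C$ forces $\theta=1$ and $t$ coprime with $|L|$; this is (iii).

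Finally, the converses are uniform. When $\theta=1$ (respectively $g=1$, so that $\widehat g=1$) and $t$ is coprime with $|L|$, the quandle in question is exactly $(L,t,1)$, which is superconnected by Corollary~\ref{(L,t,1) super}. For (i) one needs only that $t$ be coprime with $|A_n|$, which follows from $t$ being coprime with $n!$ since $|A_n|=n!/2$ divides $n!$. The theorem is therefore a synthesis of results already in hand: the only step with any content is the passage from superconnected to superfaithful for quandles, and that is routine, so I do not anticipate a genuine obstacle.
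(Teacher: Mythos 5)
Your proof is correct and follows exactly the route the paper intends (the theorem is stated there as a direct combination of the superfaithfulness classifications with Corollary~\ref{(L,t,1) super}); the bridge fact that idempotent superconnected left quasigroups are superfaithful is already recorded in the paper's preliminaries, and your two-element-subalgebra argument is a valid proof of it.
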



\subsection{Primitive quandles}\label{primitive}

A quandle $Q$ is primitive if and only if $\lmlt(Q)$ is primitive, i.e. $\lmlt(Q)_x$ is a maximal subgroup in $\lmlt(Q)$.

\begin{lemma}\label{maximality}
Let $Q$ be a quandle and $x\in Q$. Then: 
\begin{itemize}
\item[(i)]$\lmlt(Q)_x=\dis(Q)_x\langle L_x\rangle$. 
\item[(ii)] $Q$ is primitive if and only if $\dis(Q)_x$ is maximal $\widehat{L_x}$-invariant subgroup in $\dis(Q)$.
\end{itemize}

\end{lemma}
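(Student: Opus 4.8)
The plan is to prove (i) by a direct computation and then deduce (ii) from an inclusion‑preserving bijection between subgroup lattices built out of (i).

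\textbf{Part (i).} I would use the two facts recalled earlier: $\dis(Q)$ is normal in $\lmlt(Q)$ and $\lmlt(Q)=\dis(Q)\langle L_x\rangle$ for every $x$. Since $Q$ is idempotent, $L_x(x)=x*x=x$, so $L_x\in\lmlt(Q)_x$; combined with $\dis(Q)_x\le\lmlt(Q)_x$ this gives the inclusion $\dis(Q)_x\langle L_x\rangle\le\lmlt(Q)_x$. For the converse, given $g\in\lmlt(Q)_x$ write $g=hL_x^n$ with $h\in\dis(Q)$ and $n\in\mathbb Z$ (possible because $\dis(Q)\langle L_x\rangle=\lmlt(Q)$ with $\dis(Q)$ normal); then $x=g(x)=hL_x^n(x)=h(x)$, so $h\in\dis(Q)_x$ and $g\in\dis(Q)_x\langle L_x\rangle$.

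\textbf{Part (ii).} Recall $Q$ is primitive iff $\lmlt(Q)_x$ is a maximal subgroup of $\lmlt(Q)$. I would show that
$$H\longmapsto H\cap\dis(Q),\qquad D\longmapsto D\langle L_x\rangle$$
are mutually inverse, inclusion‑preserving bijections between the interval $\{H:\lmlt(Q)_x\le H\le\lmlt(Q)\}$ and the poset of $\widehat{L_x}$‑invariant subgroups $D$ with $\dis(Q)_x\le D\le\dis(Q)$. For the first map, $H\cap\dis(Q)\supseteq\dis(Q)_x$ by (i), and it is $\widehat{L_x}$‑invariant because $L_x\in\lmlt(Q)_x\le H$ normalizes $H$ while $L_x\in\lmlt(Q)$ normalizes $\dis(Q)$, hence normalizes the intersection. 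For the second map, when $L_xDL_x^{-1}=D$ the set $D\langle L_x\rangle$ is a subgroup, it contains $\dis(Q)_x\langle L_x\rangle=\lmlt(Q)_x$ by (i), and lies in $\dis(Q)\langle L_x\rangle=\lmlt(Q)$. Both maps obviously preserve inclusions. Since the bijection sends $\lmlt(Q)_x\mapsto\dis(Q)_x$ and $\lmlt(Q)\mapsto\dis(Q)$, maximality is transferred across: $\lmlt(Q)_x$ is maximal in $\lmlt(Q)$ if and only if $\dis(Q)_x$ is maximal among the $\widehat{L_x}$‑invariant subgroups of $\dis(Q)$, which is exactly the claim.

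\textbf{Main obstacle.} The only step that is not purely formal is verifying $(D\langle L_x\rangle)\cap\dis(Q)=D$. The inclusion $\supseteq$ is trivial; for $\subseteq$ I would write an element as $dL_x^n$ with $d\in D$, observe $L_x^n=d^{-1}(dL_x^n)\in\dis(Q)$, and then conclude $L_x^n\in\dis(Q)_x\le D$ since $L_x^n(x)=x$, so the element lies in $D$. This is where the hypothesis $\dis(Q)_x\le D$ is genuinely used, because $\langle L_x\rangle\cap\dis(Q)$ need not be trivial. The companion identity $(H\cap\dis(Q))\langle L_x\rangle=H$ is immediate: $L_x\in H$ gives $\subseteq$ trivially, and writing $g\in H\le\dis(Q)\langle L_x\rangle$ as $g=hL_x^n$ shows $h=gL_x^{-n}\in H\cap\dis(Q)$, hence $g\in(H\cap\dis(Q))\langle L_x\rangle$.
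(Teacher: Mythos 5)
Your proof is correct and follows essentially the same route as the paper: part (i) is the identical computation from $\lmlt(Q)=\dis(Q)\langle L_x\rangle$, and part (ii) rests on the same correspondence $N\leftrightarrow N\langle L_x\rangle$ between $\widehat{L_x}$-invariant subgroups above $\dis(Q)_x$ and subgroups above $\lmlt(Q)_x$. Your packaging of that correspondence as an explicit pair of mutually inverse, inclusion-preserving maps (with the verification that $(D\langle L_x\rangle)\cap\dis(Q)=D$) is in fact a tidier and more self-contained rendering of the paper's two-directional argument.
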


\begin{proof}
(i) Let $h\in \lmlt(Q)_x$. Then $h=h_1 L_x^n$ for some $h_1\in \dis(Q)$. Thus $h(x)=h_1(x)=x$, i.e. $h_1\in \dis(Q)_x$.

(ii) Let $\dis(Q)_x$ be a $\widehat{L_x}$-invariant maximal in $\dis(Q)$ and $h=h_1 L_x^n\in \lmlt(Q)$. Then 
$$K=\langle h,\lmlt(Q)_x\rangle=\langle h_1,\, \dis(Q)_x,\, L_x\rangle=\langle L_x^k h_1 L_x^{-k},\,k\in \mathbb{Z},\, \dis(Q)_x,L_x\rangle.$$ Then $N=\langle L_x^k h_1 L_x^{-k},\, k\in \mathbb{Z},\,\dis(Q)_x	\rangle$ is a $\widehat{L_x}$-invariant subgroup of $\dis(Q)$ containing $\dis(Q)_x$. Thus $N=\dis(Q)$ and therefore $K=\lmlt(Q)$. So, $\lmlt(Q)_x$ is maximal.
%

Assume that $\lmlt(Q)_x$ is maximal.  Let $\dis(Q)_x\leq N\leq \dis(Q)$ such that $L_x N L_x^{-1}=N$.  Then $\langle N, \lmlt(Q)_x\rangle=\langle N,\dis(Q)_x,L_x\rangle=\langle N,  L_x\rangle=N\langle L_x\rangle=\lmlt(Q)$.  Thus, $N$ is normal in $\lmlt(Q)$ contained in $\dis(Q)$, and so $N=\dis(Q)$ since $Q$ is simple.
%
\end{proof}
%

Primitive quandles belong to two families as we prove in the next lemma.

\begin{lemma}
Let $Q=(L,t,\theta)$ be a primitive quandle. Then either $t=1$ or $\theta=1$.
\end{lemma}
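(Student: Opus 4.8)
The plan is to analyze the quandle $Q = (L, t, \theta)$ via the group $G = L^t \rtimes \langle \theta_t \rangle$ in which $Q$ embeds as the conjugacy class of $z = (1,\ldots,1,\theta)$, and to use the block structure of the $\lmlt(Q)$-action together with the product structure of $L^t$. First I would recall that $\dis(Q) \leq L^t$ and that the point stabilizer $\lmlt(Q)_x = \dis(Q)_x \langle L_x \rangle$ by Lemma \ref{maximality}(i). The key observation is that when $t > 1$ the group $L^t$ carries the obvious system of "coordinate" subgroups: for each proper nonempty $S \subsetneq \{1,\ldots,t\}$ the subgroup $L^S \leq L^t$ (coordinates outside $S$ trivial) is normalized by $\dis(Q)$ if $S$ is $\theta_t$-stable, but $\theta_t$ cyclically permutes the coordinates, so the only $\theta_t$-invariant subsets are $\emptyset$ and the whole set. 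This already suggests that no coordinate-type invariant equivalence relation can exist — yet I want to show conversely that if both $t>1$ and $\theta \neq 1$ then some invariant equivalence relation \emph{does} exist, contradicting primitivity.

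The main step is to produce, when $t > 1$ and $\theta \neq 1$, a proper nontrivial $\widehat{L_z}$-invariant subgroup of $\dis(Q)$ strictly containing $\dis(Q)_z$; by Lemma \ref{maximality}(ii) this refutes primitivity. The natural candidate: set $\Delta = \{(g,g,\ldots,g) : g \in L\} \leq L^t$, the diagonal, and consider the subgroup $N$ generated by $\dis(Q)_z$ together with $\Delta \cap \dis(Q)$, or more precisely the preimage under the coordinate-difference map. Because $\theta_t$ acts on the diagonal $\Delta$ as $\theta$ acts on $L$ (up to the cyclic shift, which is trivial on constants), $\Delta$ is $\theta_t$-invariant, hence $\widehat{L_z}$-invariant; and $\dis(Q)$ visibly contains the "difference" elements $(g, g^{-1}, 1, \ldots, 1)$ and their shifts, whose product structure I would exploit to show $\langle \Delta \cap \dis(Q), \dis(Q)_z\rangle$ is proper exactly when $\theta \neq 1$ (if $\theta = 1$ this subgroup is everything, matching Corollary \ref{(L,t,1) super} where $(L,t,1)$ is primitive-free in the relevant sense). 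Concretely I expect the relevant invariant equivalence relation to be the one identifying cosets $x H_t$ and $y H_t$ when $x y^{-1}$ lies in the subgroup of $L^t$ generated by $H_t$ and the "augmentation" $\{(a_1,\ldots,a_t) : a_1 \cdots a_t \in [L,L]\}$ or a similar $\theta$-sensitive subgroup — the point being that when $\theta \neq 1$ the fixed subgroup $Fix(\theta)$ is proper in $L$, so $H_t$ is a proper subgroup of the diagonal, leaving room for an intermediate block.

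The hard part will be pinning down exactly which intermediate subgroup works and verifying it is $\widehat{L_z}$-invariant but not normal in all of $\lmlt(Q)$, i.e. genuinely strictly between $\dis(Q)_z$ and $\dis(Q)$; the bookkeeping with the semidirect product $L^t \rtimes \langle\theta_t\rangle$, the cyclic shift, and the interaction of $Fix(\theta)$ with the diagonal is where the real work lies, and one must be careful that $L$ simple (hence $[L,L]=L$) does not collapse the candidate subgroup to the trivial or the whole group — which is presumably why the dichotomy is $t=1$ \emph{or} $\theta=1$ rather than something finer. I would organize the final write-up as: (1) reduce to finding a $\widehat{L_z}$-invariant intermediate subgroup of $\dis(Q)$ via Lemma \ref{maximality}(ii); (2) compute $\dis(Q)$ and $\dis(Q)_z$ inside $L^t$ explicitly; (3) when $\theta \neq 1$ and $t > 1$, exhibit the intermediate subgroup using the diagonal / $Fix(\theta)$ obstruction and conclude.
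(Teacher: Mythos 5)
Your overall strategy --- contradict primitivity by exhibiting a subgroup strictly between the point stabilizer and the full transitive group --- is the right one, but as written the proof has a genuine gap: you never actually produce the intermediate subgroup, and the two candidates you float both fail. First, the full diagonal $\Delta=\setof{(g,\ldots,g)}{g\in L}$ is \emph{not} $\theta_t$-invariant when $\theta\neq 1$: by definition $\theta_t(g,\ldots,g)=(\theta(g),g,\ldots,g)$, which lies in $\Delta$ only if $\theta(g)=g$. Your parenthetical ``the cyclic shift is trivial on constants'' is true but beside the point, since $\theta_t$ is the cyclic shift composed with an application of $\theta$ in one coordinate; only the diagonal of $Fix(\theta)$ is preserved. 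Second, the ``augmentation'' subgroup $\setof{(a_1,\ldots,a_t)}{a_1\cdots a_t\in[L,L]}$ is all of $L^t$, because $L$ is simple nonabelian and so $[L,L]=L$ --- you flag this worry yourself but do not resolve it.

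The subgroup that does the job is simpler than anything you consider: work directly in the transitive automorphism group $G=L^t\rtimes\langle\theta\rangle$ and take $K=\setof{(x_1,\ldots,x_t,\theta^n)}{x_i\in Fix(\theta),\ n\in\mathbb{Z}}$, i.e.\ $Fix(\theta)^t$ extended by $\langle\theta\rangle$. The stabilizer of the base point $Fix(\theta_t)\in Q$ is $H=\setof{(x,\ldots,x,\theta^n)}{x\in Fix(\theta),\ n\in\mathbb{Z}}$, and clearly $H\leq K\leq G$. Primitivity makes $H$ maximal, so either $H=K$, which forces the diagonal of $Fix(\theta)$ to equal $Fix(\theta)^t$ and hence $t=1$ (using $Fix(\theta)\neq 1$, which holds for automorphisms of finite simple groups since fixed-point-free automorphisms force nilpotency), or $K=G$, which forces $Fix(\theta)=L$, i.e.\ $\theta=1$. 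This route also bypasses the need to compute $\dis(Q)$ and $\dis(Q)_z$ explicitly, which your plan via Lemma \ref{maximality}(ii) would require and which is considerably more delicate.
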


\begin{proof}
The stabilizer of $Fix(\theta_t)\in Q$ is $H=\setof{(x,\ldots,x,\theta^n)}{x\in Fix(\theta),\, n\in \mathbb{Z}}$ and it is maximal in $L^t\rtimes \langle \theta\rangle$. The subgroup $H$ is contained in the subgroup $K=\setof{(x_1,\ldots,x_t,\theta^n)}{x_i\in Fix(\theta),\, n\in \mathbb{Z}}$. If $H=K$ then $t=1$, otherwise if $K=L^t\rtimes \langle\theta\rangle$ then $\theta=1$.
\end{proof}

We can completely characterize the primitive quandles among the simple quandles of the form $(L,t,1)$.

\begin{lemma}\label{lemma subgrouop containing diagonal}
Let $L$ be a simple group, $t\in \mathbb{N}$ and $D=\setof{(x,\ldots,x)}{x\in L}\leq L^t$.  If $D\leq H\leq L^t$ then there exists an equivalence relation $\alpha$ on $\{1,\ldots,t\}$ such that
$$H=\setof{(x_1,\ldots,x_t)}{i\, \alpha\, j\,\Rightarrow\, x_i=x_j}.$$
\end{lemma}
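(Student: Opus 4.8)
The plan is to read the partition $\alpha$ directly off of $H$ and then, after a harmless normalisation, to prove by induction on $t$ that the automatic inclusion $H\subseteq K$ is an equality. Throughout, for $I\subseteq\{1,\ldots,t\}$ write $\pi_I\colon L^t\to L^{|I|}$ for the coordinate projection (abbreviating $\pi_{\{i,j\}}$ to $\pi_{i,j}$), and let $\Delta_m=\setof{(x,\ldots,x)}{x\in L}\leq L^m$ denote the diagonal, so $\Delta_t=D$.

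First I would define $\alpha$ on $\{1,\ldots,t\}$ by declaring $i\,\alpha\,j$ precisely when $x_i=x_j$ for every $(x_1,\ldots,x_t)\in H$, equivalently when $\pi_{i,j}(H)\subseteq\Delta_2$. Reflexivity and symmetry are clear and transitivity is immediate from the first formulation, so $\alpha$ is an equivalence relation; the set $K$ appearing in the statement contains $H$ by construction, so it remains to prove $K\subseteq H$. Observe that $D\leq H$ forces $\Delta_2\subseteq\pi_{i,j}(H)$, and that $\Delta_2$ is a maximal subgroup of $L\times L$ when $L$ is simple (for any intermediate $M$ the subgroup $M\cap(L\times\{1\})$ is normal in $L$, which quickly gives $M=\Delta_2$ or $M=L\times L$). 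Hence $\pi_{i,j}(H)$ equals either $\Delta_2$ or $L\times L$, the former exactly when $i\,\alpha\,j$.

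Next I would normalise. If $B_1,\ldots,B_k$ are the $\alpha$-classes, then choosing one index from each class identifies $K$ with $L^k$ through a group isomorphism that carries $H$ onto a subgroup $\widehat H\leq L^k$ with $\Delta_k\leq\widehat H$; and since distinct classes are never $\alpha$-related, the previous paragraph gives $\pi_{r,s}(\widehat H)=L\times L$ for all $r\neq s$. So the lemma reduces to the following claim: \emph{if $L$ is non-abelian simple, $\Delta_k\leq\widehat H\leq L^k$ and $\pi_{r,s}(\widehat H)=L\times L$ for all $r\neq s$, then $\widehat H=L^k$.} I would prove this by induction on $k$, the cases $k\leq 2$ being immediate. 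For $k\geq 3$, the inductive hypothesis applied to the first $k-1$ coordinates gives $\pi_{1,\ldots,k-1}(\widehat H)=L^{k-1}$. Put $M=\widehat H\cap\setof{(1,\ldots,1,x)}{x\in L}$; conjugating $M$ by elements of $\widehat H$ (whose last coordinates exhaust $L$) shows $M$ is normal in $L$ in the last slot, so $M$ is either trivial or all of $\setof{(1,\ldots,1,x)}{x\in L}$. In the second case, combining $M\leq\widehat H$ with $\pi_{1,\ldots,k-1}(\widehat H)=L^{k-1}$ gives $\widehat H=L^k$ directly. In the first case $\pi_{1,\ldots,k-1}$ restricts to an isomorphism $\widehat H\cong L^{k-1}$, so $\widehat H=\setof{(z,\psi(z))}{z\in L^{k-1}}$ for a homomorphism $\psi\colon L^{k-1}\to L$; since the normal subgroups of $L^{k-1}$ are exactly the sub-products, $\psi$ must be either trivial (impossible, as it contradicts $\pi_{1,k}(\widehat H)=L\times L$) or of the form $\chi\circ\pi_r$ for some $r$ and $\chi\in\aut{L}$, in which case $\Delta_k\leq\widehat H$ forces $\chi=\mathrm{id}$ and then $\pi_{r,k}(\widehat H)=\Delta_2$, again contradicting surjectivity. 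Hence $M$ is nontrivial and the induction closes.

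The step I expect to be the main obstacle is the case $M=\{1\}$: there one needs the structural input that every homomorphism $L^{k-1}\to L$ is a coordinate projection followed by an automorphism, and this is exactly where the non-abelian simplicity of $L$ is used essentially — for $L\cong\mathbb{Z}/p$ the statement already fails at $t=4$, e.g. the subgroup of $(\mathbb{Z}/p)^4$ generated by $D$ together with $(1,1,0,0)$ and $(1,0,1,0)$ properly contains $D$ but is not of the stated form. A secondary point requiring care is the normalisation: one must check that ``two $\alpha$-classes are distinct'' translates precisely into ``the corresponding two-coordinate projection of $\widehat H$ is all of $L\times L$'', which is what the maximality of $\Delta_2$ from the second paragraph provides.
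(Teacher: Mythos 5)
Your proof is correct, and it takes a genuinely different route from the paper's. The paper argues via supports: it considers minimal supports of nontrivial elements of $H$, uses a commutator computation (together with the fact that a nontrivial conjugacy class of $L$ generates $L$) to force the coordinates inside a minimal support to agree, takes $\alpha$ to be the equivalence relation generated by co-membership in a minimal support, and then proves the reverse inclusion $S\leq H$ by induction on the size of the support of an element of $S$. You instead read $\alpha$ off $H$ as the finest relation it respects, collapse to one coordinate per class, and prove that the resulting subgroup of $L^{k}$ is everything by a Goursat-type induction: maximality of the diagonal in $L\times L$, then the dichotomy for $\widehat H\cap(1\times\cdots\times 1\times L)$, and in the degenerate case the classification of normal subgroups of $L^{k-1}$ (equivalently, of surjections $L^{k-1}\to L$). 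Both arguments are sound; yours is closer to the standard analysis of subgroups of $L^{k}$ containing the diagonal (Scott's lemma) and makes more transparent exactly where simplicity is used. One point worth recording: your $\mathbb{Z}/p$ counterexample at $t=4$ is valid, so the lemma as literally stated fails for abelian simple $L$; the hypothesis should read \emph{non-abelian} simple. The paper's own argument also needs this, since it chooses $y\notin C_L(x_j)$, which requires $C_L(x_j)\neq L$.
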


\begin{proof}
Let $x\in L^t$. We define $supp(x)=\setof{i\in \{1,\ldots, t\}}{x_i\neq 1}$. Consider the subsets $\Delta\subseteq \{1,\ldots,t\}$ such that $supp(y)\subseteq \Delta$ for some $y\in H$. Let $\Delta$ be a minimal subset with such property and $i,j\in \Delta$. Let assume that there exists $x\in H$ such that $x_i\neq x_j$. Then
$$z=(x_i^{-1},\ldots,x_i^{-1})(x_1,\ldots ,x_i,\ldots ,x_j,\ldots)=(x_i^{-1} x_1,\ldots ,1,\ldots ,x_i^{-1}x_j,\ldots)\in H.$$ 
Let $y\notin C_L(x_j)$ and $u=x_i^{-1}x_j\neq 1$. The set $\setof{u^g}{g\in L}$ generates $L$ and so $y=u^{g_{s_1}}\ldots u^{g_{s_t}}$. Thus, 
\begin{align}\label{conj}
h=z^{(g_{s_1},\ldots ,g_{s_1})}\ldots z^{(g_{s_t},\ldots g_{s_t})}=(\ldots ,1,\ldots,y,\ldots)\in H.
\end{align}
Then $supp([x,h])\subseteq supp(x)\cap supp(h)\subset \Delta\setminus \{i\}$. The subset $\Delta$ is minimal and so $x_i=x_j$. 

Consider the set $\Phi$ of the minimal subsets $\Delta$ with the property above. Define the equivalence relation $\alpha$ of $\{1,\ldots,t\}$ as the smallest equivalence relation containing the relation: 
\begin{align*}
i\,\sim \, j \text{ if and only if } i,j\in \Delta \text{ for some } \Delta\in \Phi.
\end{align*}

Then $H\leq S=\setof{(x_1,\ldots,x_t)}{i\, \alpha\, j\,\Rightarrow\, x_i=x_j}$.

Let us show that $S\leq H$. Let $\Delta\in \Phi$ and $supp(x)\subseteq \Delta$ for $x\in S$. Since $x_i=x_j$ for every $i,j\in \Delta$ then $supp(x)=\Delta$.  If $h\in L$ and $x_i=g$ for $i\in \Delta$ then $h$ is the product of conjugate of $g$ and so, as in \eqref{conj} we get that there exists an element $y\in H$ such that $y_i=h$ for every $i\in \Delta$.

Let $z\in S$. Let us proceed by induction on the size of $supp(z)$. If $|supp(z)|=0$ we are done. Otherwise,  let $x\in H$ such that $supp(x)=\Delta$ and $x_i=z_i$ for every $i\in \Delta$. Then $z=xz'$ such that $z'_i=1$ for every $i\in \Delta$ and $z'_j=z_j$ otherwise. In particular, $z'\in S$ and so by induction $z'\in H$. Hence we can conclude that also $z\in H$.
\end{proof}

\begin{theorem}
Let $Q=(L,t,1)$ be a quandle.  Then $Q$ is primitive if and only if $t$ is prime.
\end{theorem}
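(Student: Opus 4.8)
The plan is to realise $Q=(L,t,1)$ as a transitive coset quandle, to compute its left multiplication group exactly, and then to deduce primitivity from the interval between a point stabiliser and the whole group, which will be governed by the divisor lattice of $t$. We may assume $t\geq 2$, since for $t=1$ the quandle $(L,1,1)=\mathcal{Q}(L,L,1)$ consists of a single point and is not primitive, while $1$ is not prime. Put $G=L^t\rtimes\langle\theta_t\rangle$, where (as $\theta=1$) $\theta_t$ is the cyclic shift $(x_1,\dots,x_t)\mapsto(x_t,x_1,\dots,x_{t-1})$ of $L^t$, and let $D=\{(x,\dots,x):x\in L\}$ be the diagonal. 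Then $Q=\mathcal{Q}(L^t,D,\theta_t)$ by definition, so $G$ acts transitively on $Q$ with the base point stabilised by $H=D\rtimes\langle\theta_t\rangle$, exactly as in the proof of the previous lemma; and since the simple group $L$ is nonabelian the only normal subgroup of $L^t$ contained in $D$ is trivial, so $\mathrm{Core}_G(H)=1$ and $G$ acts faithfully on $Q$.

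First I would identify $\lmlt(Q)$ with $G$. A short computation with the coset operation shows that the left translation $L_{aD}$ is the permutation induced by $(a\theta_t(a)^{-1},\theta_t)\in G$, and that $L_{aD}L_{bD}^{-1}$ is induced by $(a\theta_t(a)^{-1})(b\theta_t(b)^{-1})^{-1}\in L^t$; hence $\dis(Q)=\langle a\theta_t(a)^{-1}:a\in L^t\rangle$, which is precisely the commutator $[L^t,\langle\theta_t\rangle]$ computed inside $G$. Being a commutator of subgroups, $[L^t,\langle\theta_t\rangle]$ is normal in $G$, so in particular it is a $\theta_t$-invariant normal subgroup of $L^t$, and it is plainly nontrivial; but (as $L$ is nonabelian simple, so the normal subgroups of $L^t$ are sub-products of the direct factors) $\theta_t$ permutes the $t$ factors of $L^t$ in a single cycle, whence the only $\theta_t$-invariant normal subgroups of $L^t$ are $1$ and $L^t$. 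Therefore $\dis(Q)=L^t$, and since the left translation by the base point is $(1,\theta_t)$, the identity $\lmlt(Q)=\dis(Q)\langle L_x\rangle$ \cite[Lemma 1.1]{Super} gives $\lmlt(Q)=L^t\langle(1,\theta_t)\rangle=L^t\rtimes\langle\theta_t\rangle=G$. Consequently $Q$ is primitive if and only if $H$ is a maximal subgroup of $G$.

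It then remains to describe the interval $[H,G]$. If $H\leq K\leq G$, then $(1,\theta_t)\in H$ forces $K=(K\cap L^t)\rtimes\langle\theta_t\rangle$, and $K\cap L^t$ is a subgroup of $L^t$ containing $D$ and invariant under conjugation by $(1,\theta_t)$, i.e. under the shift. By Lemma~\ref{lemma subgrouop containing diagonal} the subgroups of $L^t$ containing $D$ are precisely the $H_\alpha=\{(x_1,\dots,x_t):i\,\alpha\,j\Rightarrow x_i=x_j\}$ for equivalence relations $\alpha$ on $\{1,\dots,t\}$, and $H_\alpha$ is shift-invariant exactly when $\alpha$ is invariant under the cyclic permutation of $\{1,\dots,t\}$. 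Identifying $\{1,\dots,t\}$ with $\mathbb{Z}/t\mathbb{Z}$, a shift-invariant equivalence relation is determined by the class $\{k:0\,\alpha\,k\}$, and this class is precisely a subgroup of $\mathbb{Z}/t\mathbb{Z}$, every subgroup arising in this way. Thus $[H,G]$ is in bijection with the subgroup lattice of $\mathbb{Z}/t\mathbb{Z}$, hence with the set of divisors of $t$, so $H$ is maximal in $G$ if and only if $t$ has exactly the two divisors $1$ and $t$, i.e. if and only if $t$ is prime.

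I expect the technical heart to be the identification $\dis(Q)=L^t$ (equivalently $\lmlt(Q)=G$): one must carry out the coset bookkeeping for $L_{aD}$ and $L_{aD}L_{bD}^{-1}$ without errors, recognise the displacement group as $[L^t,\langle\theta_t\rangle]$, and then use simplicity (and nonabelianness) of $L$ to collapse it to $L^t$. The rest is the Galois correspondence for the transitive action of $G$ on $G/H$ together with the already-available Lemma~\ref{lemma subgrouop containing diagonal}; in fact the forward implication can be extracted directly from the maximality of $H$ in $G$ noted in the proof of the preceding lemma, without recomputing $\lmlt(Q)$.
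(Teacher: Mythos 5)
Your proof is correct, and it overlaps with the paper's only in the reverse implication: there, both arguments rest on Lemma \ref{lemma subgrouop containing diagonal} plus the observation that shift-invariance of a subgroup $H_\alpha \supseteq D$ forces shift-invariance of the equivalence relation $\alpha$. The differences are worth recording. First, you actually prove that $\dis(Q)\cong L^t$ and $\lmlt(Q)\cong L^t\rtimes\langle\theta_t\rangle$, by recognising the displacement group as $[L^t,\langle\theta_t\rangle]$, a nontrivial shift-invariant normal subgroup of $L^t$, hence all of $L^t$; the paper asserts $\dis(Q)\cong L^t$ without argument. This also lets you bypass Lemma \ref{maximality} and test primitivity directly as maximality of the point stabiliser in $\lmlt(Q)$. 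Second, for the forward implication the paper exhibits, for each factorisation $t=mn$ with $m,n>1$, the explicit proper invariant equivalence relation $\lambda_{Q_n}$, whereas you compute the whole interval between the stabiliser and $\lmlt(Q)$ and identify it (order-reversingly) with the subgroup lattice of $\mathbb{Z}/t\mathbb{Z}$, since a shift-invariant equivalence relation on $\mathbb{Z}/t\mathbb{Z}$ is exactly the coset partition of the subgroup $\{k : 0\,\alpha\,k\}$. Your version is more uniform --- both implications fall out of one lattice isomorphism, which moreover classifies all invariant equivalence relations of $(L,t,1)$ --- while the paper's witness $\lambda_{Q_n}$ is shorter and more concrete. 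Two small points to tidy: the triviality of $\mathrm{Core}_G(H)$ needs one more line to exclude elements $(d,\theta_t^n)$ with $n\not\equiv 0$ (any normal subgroup meeting $L^t$ trivially centralises $L^t$, which such elements do not), and the exclusion of $t=1$ is a convention the paper also leaves implicit; neither affects the argument.
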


\begin{proof}
For $Q=(L,t,1)$ and $D=\setof{(x,\ldots,x)}{x\in L}\leq L^t$ we have that $\dis(Q)\cong L^t$ and $D$ is the stabilizer of the element $Fix(1_t)\in Q$.  

Let $t=mn$. Then $\lambda_{Q_n}$ is a proper invariant equivalence relation of $Q$. Indeed $L_{xFix(1_t)}^n=L_{Fix(1_t)}^n=1_t^n$ if and only if $$x=(\underbrace{y,y,\ldots,y}_m)$$  where $y=(z_1,z_2,\ldots,z_n)$ for $z_i\in L$ and $i=1,\ldots,n$. 

Assume that $t$ is prime and let $H$ be a $\widehat{L_x}$-invariant subgroup of $L^t$ containing the diagonal $D$.  By Lemma \ref{lemma subgrouop containing diagonal} there exists an equivalence relation $\alpha$ such that
$$H=\setof{(x_1,\ldots,x_t)}{i\, \alpha\, j\,\Rightarrow\, x_i=x_j}.$$
Assume that $i\, \alpha\, j$. Since $H$ is $\widehat{L_x}$-invariant then it is easy to check that if $(x_1,\ldots,x_t)\in H$ and $x_i=x_j$ then $x_{i-1}=x_{j-1}$. Therefore then either $H=D$ or $H=L^t$. Therefore $D$ is a maximal $\widehat{L_x}$-invariant subgroup. Thus $Q$ is primitive by Lemma \ref{maximality}. 
\end{proof}


Let $n\in \mathbb{N}$. We define $D(n)=\setof{p\in \mathbb{N}}{p \text{ is prime such that }p\,|\,n}$ and $rad(n)=\prod_{p\in D(n)} p$.

\begin{proposition}\label{on rad}
Let $Q=(L,t,\theta)$ be a primitive quandle. If $rad(|L_x|)$ does not divide $rad(|L|)$ then $Q$ is superconnected.
\end{proposition}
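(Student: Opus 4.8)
My plan is to reduce the statement to Corollary \ref{U} together with the elementary fact that the left multiplication group of a subalgebra can only shrink when one replaces the quandle operation by its $m$-th iterate.

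First I would fix the bookkeeping. Since $Q$ is primitive it is connected, and by the classification of primitive quandles $(L,t,\theta)$ either $t=1$ or $\theta=1$; in either case $Q$ is (up to isomorphism) a coset quandle over $L^{t}$, so $\dis(Q)$ is a section of $L^{t}$ and $|\dis(Q)|$ divides $|L|^{t}$. Since $rad(|L_x|)$ does not divide $rad(|L|)$, there is a prime $p$ with $p\mid|L_x|$ and $p\nmid|L|$, hence $p\nmid|\dis(Q)|$. Writing $|L_x|=p^{a}m$ with $p\nmid m$, the numbers $|\dis(Q)|$ and $p^{a}$ are coprime, so Corollary \ref{U} (with $n=p^{a}$) tells us that the orbit of $L_x^{m}$ in $Q_m/\lambda_{Q_m}$ is superconnected.

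The next step is to show that this orbit is all of $Q_m$, i.e. that $Q_m$ is connected and $\lambda_{Q_m}=0_Q$, so that $Q_m$ itself is superconnected. For faithfulness: $\lambda_{Q_m}$ is an invariant equivalence relation of the rack $Q$, so primitivity forces $\lambda_{Q_m}\in\{0_Q,1_Q\}$, and $\lambda_{Q_m}\neq 1_Q$ because $Q_m$ is idempotent and $L_x^{m}$ is not the identity (its order is $p^{a}>1$), so the left translations of $Q_m$ are not all equal. For connectedness: since $Q$ is connected and idempotent, $\dis(Q)$ is transitive and each $L_y$ is a $\dis(Q)$-conjugate of $L_x$, whence $\lmlt(Q_m)=\langle L_y^{m}:y\in Q\rangle$ is the normal closure of $\langle L_x^{m}\rangle$ in $\lmlt(Q)=\dis(Q)\langle L_x\rangle$, and so contains $[\dis(Q),\langle L_x^{m}\rangle]$. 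When $\theta\neq 1$ (hence $t=1$) one checks $\dis(Q)=[L,\langle\theta\rangle]\cong L$, and $L_x^{m}$ acts on it as $\theta^{m}\neq 1$, so $[\dis(Q),\langle L_x^{m}\rangle]$ is a non-trivial normal subgroup of the simple group $L$, i.e. all of $\dis(Q)$; when $\theta= 1$ the characterisation of primitive quandles $(L,t,1)$ forces $t$ prime, hence $p=t$, $m=1$ and $Q_m=Q$, which is connected. Either way $\lmlt(Q_m)\supseteq\dis(Q)$ is transitive, so $Q_m$ is superconnected.

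Finally I would transfer superconnectedness from $Q_m$ to $Q$. Any subalgebra $S$ of $Q$ is closed under the maps $L_s|_S$ ($s\in S$), hence under their $m$-th powers $L_s^{m}|_S=L^{Q_m}_s|_S$, so $S$ is a subalgebra of $Q_m$; moreover $\lmlt_{Q_m}(S)=\langle L_s^{m}|_S:s\in S\rangle\leq\langle L_s|_S:s\in S\rangle=\lmlt_{Q}(S)$. Since $Q_m$ is superconnected, $\lmlt_{Q_m}(S)$ acts transitively on $S$, hence so does the larger group $\lmlt_{Q}(S)$; thus $S$ is connected in $Q$. As $S$ was arbitrary, $Q$ is superconnected. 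The step I expect to be the main obstacle is the connectedness of $Q_m$: this is precisely where one needs the structural input that, by the classification of primitive quandles, $Q$ is a coset quandle over $L$ with $\dis(Q)$ either all of $L$ (when $\theta\neq 1$) or positioned so that $m=1$ (when $\theta=1$), so that simplicity of $L$ forces $\lmlt(Q_m)$ to recover $\dis(Q)$; the collapse of $\lambda_{Q_m}$ is the other place primitivity is used, and in the degenerate case $\theta=1$ everything also follows from Corollary \ref{(L,t,1) super}.
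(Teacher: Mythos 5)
Your proof is correct and follows essentially the same route as the paper's: pick the prime $p$ dividing $|L_x|$ but not $|L|$, pass to $Q_m$ for the $p'$-part $m$ of $|L_x|$, apply Corollary \ref{U}, use primitivity to force $\lambda_{Q_m}=0_Q$ and connectedness of $Q_m$, and transfer superconnectedness back via $\langle L_x^m,L_y^m\rangle\leq\langle L_x,L_y\rangle$. The only difference is cosmetic: where you establish connectedness of $Q_m$ by a case split on $t=1$ versus $\theta=1$ and a normal-closure computation, the paper gets it in one line from $\dis(Q_m)\in\N(Q)=\{1,\dis(Q)\}$ (Lemma \ref{simple norm}).
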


\begin{proof}
Since $rad(|L_x|)$ does not divide $rad(|L|)$, there exist $p$ dividing $|L_x|$ but not dividing $|L|$. Thus, $|L_x|=p^n m$ and $|\dis(Q)|=|L|^t$ is coprime with $p$. Consider the quandle $Q_m$. Since $\dis(Q_m)\in \N(Q)$, then $\dis(Q_m)=\dis(Q)$ and therefore $Q_m$ is connected. Moreover $\lambda_{Q_m}$ is an invariant equivalence relation for $Q$ and therefore $Q_m$ is faithful. So, by Corollary \ref{U}, $Q_m$ is supeconnected.  Accordingly, for every $x,y\in Q$ there exists $h\in \langle L_x^m,L_y^m\rangle\leq \langle L_x,L_y\rangle$ such that $y=h(x)$. Hence also $Q$ is superconnected.
%
%
\end{proof}

Proposition \ref{on rad} can be applied only to non-sporadic groups and non-inner automorphisms. Indeed, let $Q=(L,t,\theta)$:
\begin{itemize}
\item[(i)] if $\theta=\widehat{g}$ then $rad(|L_x|)=rad(|\widehat{g}|)$ divides $rad(|L|)$.

\item[(ii)] If $L$ is a sporadic group then $\theta^2$ is inner ($Out(L)$ has size $2$ \cite{Autspor}) and so $|\theta|=gcd(2,|\theta|)|\theta^2|$.  Therefore $rad(|\theta|)$ divides $rad(|L|)$. 
\end{itemize}

\bibliographystyle{amsalpha}
\bibliography{references} 

\def\cprime{$'$} \def\cprime{$'$}
\providecommand{\bysame}{\leavevmode\hbox to3em{\hrulefill}\thinspace}
\providecommand{\MR}{\relax\ifhmode\unskip\space\fi MR }
\providecommand{\MRhref}[2]{%
  \href{http://www.ams.org/mathscinet-getitem?mr=#1}{#2}
}
\providecommand{\href}[2]{#2}
\begin{thebibliography}{JPSZD15}

\bibitem[AG03]{AG}
Nicol{\'a}s Andruskiewitsch and Mat{\'{\i}}as Gra{\~n}a, \emph{From racks to
  pointed {H}opf algebras}, Adv. Math. \textbf{178} (2003), no.~2, 177--243.
  \MR{1994219 (2004i:16046)}

\bibitem[Ber12]{UA}
Clifford Bergman, \emph{Universal algebra}, Pure and Applied Mathematics (Boca
  Raton), vol. 301, CRC Press, Boca Raton, FL, 2012, Fundamentals and selected
  topics. \MR{2839398}

\bibitem[BF21]{Maltsev_paper}
Marco Bonatto and Stefano Fioravanti, \emph{Maltsev classes of left-quasigroups
  and quandles}, Quasigroups and Related Systems \textbf{29} (2021), 177--192.

\bibitem[Bon20]{Principal}
Marco Bonatto, \emph{Principal and doubly homogeneous quandles}, Monatshefte
  f{\"u}r Mathematik \textbf{191} (2020), no.~4, 691--717.

\bibitem[Bon22a]{semimedial}
\bysame, \emph{Medial and semimedial left quasigroups}, Journal of Algebra and
  Its Applications \textbf{21} (2022), no.~2.

\bibitem[Bon22b]{Super}
\bysame, \emph{{Superconnected left quasigroups and involutory quandles}},
  Communications in Algebra \textbf{50} (2022), no.~9, 3978--3994.

\bibitem[Bon23]{Galois}
\bysame, \emph{{Two Galois connections for left quasigroups}}, arXiv e-prints
  (2023), arXiv:2311.14574.

\bibitem[BS21]{CP}
Marco {Bonatto} and David {Stanovsk{\'y}}, \emph{{Commutator theory for racks
  and quandles}}, J. Math. Soc. Japan \textbf{73} (2021), 41--75.

\bibitem[Bur13]{Burn}
W.S. Burnside, \emph{Theory of groups of finite order}, Dover Books on
  Mathematics, Dover Publications, 2013.

\bibitem[FM87]{comm}
Ralph Freese and Ralph McKenzie, \emph{Commutator theory for congruence modular
  varieties}, London Mathematical Society Lecture Note Series, vol. 125,
  Cambridge University Press, Cambridge, 1987. \MR{909290}

\bibitem[FR00]{Robinson}
Paul Flavell and Geoffrey~R. Robinson, \emph{Fixed points of coprime
  automorphisms and generalizations of glauberman's z*-theorem}, Journal of
  Algebra \textbf{226} (2000), no.~2, 714--718.

\bibitem[GEL16]{ConjugacySimple}
V.~M. Galkin, L.~N. Erofeeva, and S.~V. Leshcheva, \emph{Commuting elements in
  a conjugacy class of finite groups}, Russian Mathematics \textbf{60} (2016),
  no.~8, 9--16.

\bibitem[GGLN19]{GGLN}
George {Glauberman}, Robert {Guralnick}, Justin {Lynd}, and Gabriel {Navarro},
  \emph{{Centers of Sylow Subgroups and Automorphisms}}, arXiv e-prints (2019),
  arXiv:1901.07048.

\bibitem[HSV16]{hsv}
Alexander Hulpke, David Stanovsk\'{y}, and Petr Vojt\v{e}chovsk\'{y},
  \emph{Connected quandles and transitive groups}, J. Pure Appl. Algebra
  \textbf{220} (2016), no.~2, 735--758. \MR{3399387}

\bibitem[Joy82a]{J}
David Joyce, \emph{A classifying invariant of knots, the knot quandle}, J. Pure
  Appl. Algebra \textbf{23} (1982), no.~1, 37--65. \MR{638121 (83m:57007)}

\bibitem[Joy82b]{JS}
\bysame, \emph{Simple quandles}, J. Algebra \textbf{79} (1982), no.~2,
  307--318. \MR{682881}

\bibitem[JPSZD15]{Medial}
P\v{r}emysl Jedli\v{c}ka, Agata Pilitowska, David Stanovsk\'{y}, and Anna
  Zamojska-Dzienio, \emph{The structure of medial quandles}, J. Algebra
  \textbf{443} (2015), 300--334. \MR{3400403}

\bibitem[Lyo11]{Autspor}
Richard Lyons, \emph{Automorphism groups of sporadic groups}, arXiv: Group
  Theory (2011).

\bibitem[Mat82]{Matveev}
S.~V. Matveev, \emph{Distributive groupoids in knot theory}, Mat. Sb. (N.S.)
  \textbf{119(161)} (1982), no.~1, 78--88, 160. \MR{672410}

\bibitem[RIG]{RIG}
Mat{\'{\i}}as Gra{\~n}a and Leandro Vendramin, \emph{{Rig, a GAP package for
  racks, quandles and Nichols algebras}}.

\bibitem[Ste01]{Stein2}
Alexander Stein, \emph{A conjugacy class as a transversal in a finite group},
  J. Algebra \textbf{239} (2001), no.~1, 365--390. \MR{1827889}

\end{thebibliography}
\newpage

\end{document}